\def\1#1{\overline{#1}}
\def\2#1{\widetilde{#1}}
\def\3#1{\widehat{#1}}
\def\4#1{\mathbb{#1}}
\def\5#1{\frak{#1}}
\def\6#1{{\mathcal{#1}}}
\def\C{{\4C}}
\newtheorem{thm}{Theorem}[section]
\newtheorem{propos}[thm]{Proposition}
\newtheorem{corol}[thm]{Corollary}
\theoremstyle{definition}
\newtheorem{dfn}[thm]{Definition}
\newtheorem{ex}[thm]{Example}
\newtheorem{rema}[thm]{Remark}
\def\label#1{\label{#1}{\bf (#1)}~}
\newcommand{\im}{\ensuremath{\mbox{\rm Im}\,}}
\newcommand{\re}{\ensuremath{\mbox{\rm Re}\,}}
\newcommand{\theor}[1]{\smallskip  \noindent \bf Theorem #1.\it\,\,}
\newcommand{\CC}[1]{\mathbb{C}^{#1}}
\newcommand{\RR}[1]{\mathbb{R}^{#1}}
\numberwithin{equation}{section}
\title[Convergent normal form and canonical connection]{Convergent normal form and canonical connection for hypersurfaces of finite type in $\CC{2}$}
\author {I. Kossovskiy}
\address{Department of Mathematics, University of Vienna}
\email{ilya.kossovskiy@univie.ac.at}
\author {D. Zaitsev}
\address{School of Mathematics, Trinity College, Dublin}
\email{zaitsev@maths.tcd.ie}
\keywords{Normal forms, finite type hypersurfaces, Levi-degenerate hypersurfaces, CR-mappings, holomorphic classification, canonical connection}
\begin{document}

\date{\today}

\begin{abstract}
We study the holomorphic equivalence problem for finite type hypersurfaces in $\CC{2}$. We discover a geometric condition, which is sufficient for the existence of a natural convergent normal form for a finite type hypersurface. We also provide an explicit construction of such a normal form. As an application, we construct a canonical connection for a large class of finite type hypersurfaces. To the best of our knowledge, this gives the first construction of an invariant connection for Levi-degenerate hypersurfaces in $\CC{2}$.
\end{abstract}

\maketitle \tableofcontents

\section{Introduction}

In their celebrated paper \cite{chern}, Chern and Moser constructed a {\em convergent} normal form for real-analytic hypersurfaces in $\CC{N}$, $N\geq 2$, at Levi-nondegenerate points.  Since then, normal forms have attracted considerable attention, see e.g., Wong \cite{wong}, Ebenfelt \cite{ebenfeltjdg,ebenfeltC3},   Kolar \cite{kolar}, Kolar and
Lamel \cite{kl}, Moser and Webster \cite{mw}, Gong \cite{gong1,gong2}, Huang
and Yin \cite{hy}, Coffman \cite{coffman}, Burcea \cite{bu}, Ezhov and Schmalz \cite{es}, and
Beloshapka \cite{belsing,obzor}. However, the problem of extending Chern-Moser\rq{}s result to {\em Levi-degenerate} points remains widely open. 

In the previous paper \cite{generic}, the authors constructed  {\em convergent} normal form for real-analytic hypersurfaces in $\CC{N}$, $N\geq 2$, at so-called {\em generic Levi degeneracy points} (that is, points where the Levi form is of corank $1$ and its determinant has nonvanishing differential along the Levi kernel). \rm  We refer the reader to the introduction given in that paper, for a general overview and discussion of normal forms and their importance.
We recall that, in the two-dimensional case, a {\em formal}  normal form for finite type hypersurfaces in $\CC{2}$ was previously obtained by Kolar \cite{kolar}. Kolar also showed in \cite{kolardiverg} that his normal form  is {\em divergent} in general. 

In this paper, we are looking for (more general) optimal conditions in $\CC{2}$ when a convergent normal form is possible.  Here we consider  normal forms of the kind
\begin{equation}\label{basic}
v = \sum_{k,l>0}\Phi_{kl}(u) z^k \bar z^l, \quad (z,w)\in\CC{}\times\CC{}, \quad w=u+iv,
\end{equation}
 given by conditions on the coefficient functions $\Phi_{kl}(u)$. (This is motivated by the work of Chern-Moser \cite{chern} and the majority of known constructions in other cases.) In particular, this includes certain nondegeneracy condition imposed on the leading terms $\sum\Phi_{kl}z^k\bar z^l$ with minimal $k+l$. A convergent normal form of the latter kind is only possible when {\em the type of the hypersurface is finite and constant along the line} 
 \begin{equation}\label{Gamma}
\Gamma=\bigl\{z=0,\,v=0\bigr\}=\bigl\{(0,u):u\in\RR{}\bigr\}.
\end{equation}
This motivates the following definition.

\begin{dfn}
Let $M\subset\CC{2}$ be a real-analytic hypersurface and $p\in M$.  We call a smooth real-analytic curve $\gamma\subset M,\,\gamma\ni p$, a \it transverse curve of constant type, \rm if $\gamma$ is transverse to the complex tangent $T^{\CC{}}_q M$ at any point $q\in\gamma$ and the type of $M$ is constant along $\gamma$. 
\end{dfn}

In view of the above discussion, the presence of such curve is {\em a necessary condition} for the existence of the kind of normal form under consideration. We prove here in Theorems 1 and 2 below that this condition is in fact sufficient. 

Recall that {\em the type} of a hypersurface \eqref{basic} at $0$ is the minimum $k+l$ for which $\Phi_{kl}(0)\neq 0$.
The type of a real hypersurface plays an important role, among others, in solving the $\bar\partial$-problem, see e.g. Kohn \cite{kohn}, D\rq{}Angelo \cite{dangelo} and Catlin \cite{catlin2}.   

Once a hypersurface contains a transverse curve of constant type, two fundamentally different cases occur, depending on whether the number of such curves is infinite or finite. These two cases can be invariantly distinguished by the set of points of maximum type: 

\begin{dfn} Let $M\subset\CC{2}$ be a real-analytic hypersurface of finite
type $k\geq 3$ at a point $p\in M$, $U$ a sufficiently small neighborhood of $p$ in $\CC{2}$, and $\Sigma\ni p$ the Levi degeneracy set of $M$. 
We  call the (germ at $p$ of the) set 
$$C=\Bigl\{q\in M\cap U:\,\, \mbox{the\,\,type  of}\,\,  M\,\,\mbox{at\,\,} q\,\,\mbox{equals}\,\,k\,\Bigr\}\subset\Sigma$$ 
 the \it maximal type locus (or set) at $p$. \rm
\end{dfn}

Note that, according to the upper semi-continuity of the type in $\CC{2}$, in a sufficiently small neighborhood of a point $p\in M$ of finite type $k\geq 2$ all other points in $M$ have finite type, not exceeding $k$.  

It will be shown in Section 4 that the maximal type locus is either a single point, or a real-analytic subset  of dimension 1 in $M$,
or a  smooth real-analytic hypersurface in $M$, which coincides with the whole Levi-degeneracy set $\Sigma$. Due to the geometric nature, we have different normalization procedures in the 2- and 1-dimensional cases, as stated in Theorems 1 and 2 respectively.

\subsection{Convergent normal form} We now formulate our first main result. We begin by recalling that a germ of a hypersurface of a finite type $k\geq 3$ can be always represented in suitable holomorphic coordinates as 
\begin{equation}\label{polmodel}
v=P(z,\bar z)+o(|z|^k+|u|),
\end{equation}
where $P(z,\bar z)$ is a non-zero homogeneous polynomial of degree $k$ in $z,\bar z$ without harmonic terms. We expand  \eqref{polmodel} as
$$v=P(z,\bar z)+\sum_{\alpha,\beta\geq 0}
\Phi_{\alpha\beta}(u)z^\alpha\bar z^\beta.$$
We then have $\Phi_{\alpha\beta}(0)=0$ for $\alpha+\beta \leq k$.

\medskip

\theor{1} Let $M\subset\CC{2}$ be a real-analytic hypersurface of
finite type $k\geq 3$ at a point $p\in M$. Assume that the maximal type locus at $p$ has dimension $2$. 
Then there exists a biholomorphic map $F:\, (\CC{2},p)\mapsto (\CC{2},0)$,
which brings $(M,p)$ into a normal form 
$$\Bigl\{ v=P(z,\bar z)
+\sum_{\alpha + \beta \geq k, \, \alpha,\beta>0}
\Phi_{\alpha\beta}(u)z^\alpha\bar z^\beta\Bigr\},$$
 where the polynomial 
 $P(z,\bar z)=\frac{1}{k}\Bigl[(z+\bar z)^k-z^k-\bar z^k\Bigr]$, and $\Phi_{\alpha\beta}$ 
 satisfy 
\begin{equation}\label{strongtube2}
\begin{aligned}
%\Phi_{\alpha 0}=0,\,\alpha\geq 0,\quad 
\Phi_{\alpha 1}=0,\,\alpha \ge k-1,\quad
 \re\Phi_{k,k-1}=\im\Phi_{2k-2,2}=0 .& 
 \end{aligned}
\end{equation}
The normalizing transformation $F$  is uniquely determined by the restriction of its differential $dF_p$ onto the complex tangent $T^{\CC{}}_p M$. Moreover, two germs $(M,p)$ and $(M^*,p^*)$ are biholomorphically equivalent if and only if, for some  (and then for any)  normal forms $(N,0)$ and $(N^*,0)$ of them, there exists a linear map 
\begin{equation}
\label{dilations}\Lambda(z,w)=(\lambda z,\lambda^k
w),\,\lambda\in\RR{}\setminus\{0\},
\end{equation} 
transforming $(N,0)$ into $(N^*,0)$. The Levi degeneracy set $\Sigma$ of $M$, which in this case is a smooth real-analytic totally real surface in $\CC{2}$ transverse to the complex tangent $T^{\CC{}}_p M$  is canonically foliated by distinguished biholomorphically invariant curves, called degenerate chains, where the chain through p is locally given by $\bigl\{z = 0,\,v=0\bigr\}$ in any normal form at p. \rm

\medskip

Theorem 1 addresses the case when the set of transverse curves of constant type through $p$ is infinite. Note that for $k=3$ a finite type hypersurface in $\CC{2}$ has a \it generic Levi degeneracy, \rm and thus automatically satisfies the conditions of Theorem 1. Convergence of a normal form in the latter case was already studied in \cite{generic}, however, for completeness we do not exclude it from our considerations. 

\smallskip

On the other hand, Theorem 2 below addresses the case when the maximal type locus has dimension $1$ and hence the set of transverse curves of constant type through $p$ is non-empty but finite. In the latter case, we call each such  distinguished curve in $M$ again  a {\it degenerate chain}, since it  transforms into the line \eqref{Gamma} in a suitable normal form. \rm The finite collection $\bigl\{\gamma_1,...,\gamma_s\bigr\}$ of degenerate chains in $M$ is an additional biholomorphic invariant of $M$, that is why the normalization procedure in this case is  treated as a normalization of a triple $(M,\gamma_j,p)$
for some choice of an integer $j=1,...,s$. To formulate the normalization result in this case we need to consider the \emph{complex defining equation} 
\begin{equation}\label{complex}
w=\Theta(z,\bar z,\bar w)
\end{equation}
of a hypersurface (see, e.g., \cite{ber}) in addition to the real defining equation 
\begin{equation}\label{real}
v=\Phi(z,\bar z,u).
\end{equation} 
Recall that \eqref{complex} is obtained from \eqref{real} by substituting $u=(w+\bar w)/2,\,v=(w-\bar w)/2i$ and solving for $w$ by using the implicit function theorem (and \eqref{real} is obtained from \eqref{complex} similarly). The function $\Theta$ satisfies the reality condition 
\begin{equation}\label{reality}
\bar\Theta(\bar z,z,\Theta(z,\bar z,\bar w))\equiv \bar w.
\end{equation}
In terms of the complex defining function $\Theta$ the polynomial approximation \eqref{polmodel} reads as
\begin{equation}\label{polmodelc}
w=\bar w + 2iP(z,\bar z)+o(|z|^k+|\bar w|),
\end{equation}
which we expand   as
$$w=\bar w+2iP(z,\bar z)+\sum\nolimits_{\alpha,\beta\geq 0}
\Theta_{\alpha\beta}(\bar w)z^\alpha\bar z^\beta.$$
We then have $\Theta_{\alpha\beta}(0)=0$ for $\alpha+\beta \leq k$. 

\medskip

\theor{2} Let $M\subset\CC{2}$ be a real-analytic hypersurface of
finite type $k\geq 3$ at a point $p\in M$. 
Assume that there exists at least one but finitely many
transverse curves of constant type  in $M$, passing through $p$.
% Let then $\bigl\{\gamma_1,...,\gamma_s\bigr\}$ denotes the finite collection of %degenerate chains in $M$ through $p$. 
Then, for any choice of a transverse curve of constant type (degenerate chain) $\gamma$ through $p$, there exists a biholomorphic map $F:\, (\CC{2},p)\mapsto (\CC{2},0)$,
sending $\gamma$ into the line \eqref{Gamma}
and
 $(M,p)$ into the normal form 
$$ \Bigl\{w=\bar w+2iP(z,\bar z)+\sum_{\alpha,\beta>0,\,\alpha+\beta\geq k}
\Theta_{\alpha\beta}(\bar w)z^\alpha\bar z^\beta\Bigr\},$$
 such that one of the two following cases {\bf (i)} or {\bf (ii)} holds:

\smallskip

\noindent {\bf (i) (circular case)}  The type $k=:2\nu$ is even,  
$$P(z,\bar z)=|z|^k,$$
and the functions $\Theta_{\alpha\beta}$ satisfy
\begin{equation}\label{strongcirc}
%\Phi_{\nu\nu}=0, \quad
%\Phi_{\alpha 0}=0,\,\alpha\geq 0,\quad 
\Theta_{\nu\alpha}=0,\,\alpha \ge \nu,\quad
%\Phi_{\alpha\beta}=0,\,\alpha+\beta\leq k-1,\quad
\im\Theta_{2\nu,2\nu}=\im\Theta_{3\nu,3\nu}=0. 
\end{equation}
The normalizing transformation $F$  is uniquely determined by the restriction of its differential $dF_p$ onto the complex tangent $T^{\CC{}}_p M$ and the restriction of its Hessian $D^2 F_p$ onto the tangent space $T_p\gamma$. In turn, $F$ is unique up to the action of the subgroup 
\begin{equation}
\label{biggroup}\Lambda(z,w)=\left(\frac{\lambda e^{i\theta} z}{(1+rw)^{1/\nu}},
\frac{\lambda^{2\nu}
w}{1+rw}\right),\,\lambda,\theta,r\in\RR{},\,\lambda\neq 0,
\end{equation}  
of the projective group $\mbox{Aut}\,(\mathbb{CP}^2)$.
%Moreover, two germs $(M,p)$ and $(M^*,p^*)$ are biholomorphically equivalent if and only if for some (and then for any) normal forms $(N,0)$ and $(N^*,0)$ of them there exists a linear-fractional map
%\begin{equation}
%\label{biggroup}\Lambda(z,w)=\left(\frac{\lambda e^{i\theta} z}{(1+rw)^{1/\nu}},
%\frac{\lambda^{2\nu}
%w}{1+rw}\right),\,\lambda,\theta,r\in\RR{},\,\lambda\neq 0,
%\end{equation}   
%transforming $(N,0)$ into $(N^*,0)$. 
Moreover, the degenerate chain $\gamma$ through $p$ is unique in the circular case.
% and s given locally by the equation $\{z=0,v=0\}$ in any normal form of $M$ at $p$.

\medskip

\noindent {\bf (ii) (tubular/generic case)} the polynomial $P(z,\bar z)$ has the form
$$P(z,\bar z) = z^\nu\bar z^{k-\nu}+\bar z^\nu z^{k-\nu}+\sum_{\nu+1\leq j\leq k/2}\bigl(a_jz^j\bar z^{k-j}+\bar a_j\bar z^j z^{k-j}\bigr)$$ for some integer $1\leq \nu < k/2$,  and the functions $\Theta_{\alpha\beta}$ satisfy
\begin{equation}\label{stronggeneric}
\begin{aligned}
%\Phi_{\alpha 0}=0,\,\alpha\geq 0,\quad  
\Theta_{\nu\alpha}=0,\,\alpha\geq k-\nu,\quad
% \Phi_{\alpha\beta}=0,\,\alpha+\beta\leq k-1,\quad 
\re\Theta_{2\nu,2k-2\nu}=0 . 
 \end{aligned}
\end{equation}
The normalizing transformation $F$  is uniquely determined by  the initial choice of a degenerate chain $\gamma$ and by the restriction of the differential $dF_p$ onto the complex tangent $T^{\CC{}}_p M$. Moreover, two germs $(M,p)$ and $(M^*,p^*)$ are biholomorphically
equivalent if and only if, for some choice of degenerate chains $\gamma\subset M,\,\gamma^*\subset M^*$ and some (hence any) respective normal forms
$(N,0)$ and $(N^*,0)$, there exists a linear map \eqref{dilations},
transforming $(N,0)$ into $(N^*,0)$.
 \rm

\medskip

\begin{rema} 
Note that for $\nu=1$ the normalization conditions in case (ii) of the theorem can be similarly reformulated in terms of the real defining function $\Phi$. However, for $\nu\geq 2$ the transfer to the real defining function is more complicated. We refer to the work \cite{zaitsevemb} of the second author (see Proposition 3.3 there) where the necessary transfer formulas are provided explicitly. 
\end{rema}

%\begin{rema} 
%We would like to emphasize that, in contrast with the Chern-Moser normal form, our normal form is invariant under the automorphism group of the associated polynomial model. In particular, given any $M$ and $M\rq{}$, any choice of normal forms can be used for their comparison. 
%\end{rema}

\begin{rema} 
It will be shown in the proof of Theorem 2 that the degenerate chain through $p$ in the case {\bf (ii)} is always unique, with the only exception of the so-called {\em tubular case}, when the polynomial $P(z,\bar z)$ has the form 
$$P(z,\bar z)=\frac{1}{k}\Bigl[(z+\bar z)^k-z^k-\bar z^k\Bigr].$$
On the other hand, as Example 5.1 in Section 5 shows, the statement of Theorem 2 in the tubular case  can not be strengthened: there exist hypersurfaces of this class, for which there are more than one (but finitely many) transverse curves of constant type, passing through a finite type point $p\in M$. 
\end{rema}

We call the convergent normal form, provided by Theorems 1 and 2, 
\it the strong normal form. \rm We note that this normal form is different from Kolar\rq{}s {formal}  normal form in \cite{kolar} (see also Remark 2.2 below).

\subsection{Canonical connection along the Levi degeneracy set}

As our second main result, we construct a canonical connection on the Levi degeneracy set in the case when the maximal type locus is 2-dimensional, which gives a different convergent normal form in that case and therefore a solution to the equivalence problem. We emphasize that this is a different kind of connection than the one known in the literature (e.g. \cite{cartan, tanaka, chern, ebenfeltduke, mizner, gami, iz, bes2, cap-schichl, schmslov, eis}). In contrast to situations considered in the cited work, where the connection arises due to certain \lq\lq{}uniformity\rq\rq{}    of the CR-structure, in  our case, already the rank of the Levi form is not constant. In this situation, known methods of constructing connections can not be used. Instead, we use a different, normal form inspired approach to construct a canonical connection only along the Levi degeneracy set. Since the latter is 2-dimensional, this connection turns out to be sufficient to solve the equivalence problem. As a byproduct, taking the normal coordinates of the complexified canonical connection, we obtain a new convergent normal form. 

\medskip

\theor{3} Let $M\subset\CC{2}$ be  a real-analytic hypersurface of finite type $k\geq 3$ at a point $p\in M$. Suppose that the maximal type locus at $p$ has dimension $2$. Then the Levi degeneracy set $\Sigma$ of $M$ is a smooth real-analytic surface, transverse to $T^{\CC{}}_p M$ and totally real in $\CC{2}$. Moreover, one can define  a canonical biholomorphically invariant connection $\nabla$ on $\Sigma$.
The normal coordinates, given by the geodesic flow of  the complexification $\nabla^{\CC{}}$ of
$\nabla$, reduce the CR-equivalence problem between two germs $(M,p)$ and $(M^*,p^*)$ as above to their linear equivalence problem in the normal coordinates. In fact, it suffices to consider linear scalings \eqref{dilations}. 
Thus $\nabla$ provides a solution for the biholomorphic equivalence problem for the above described class of real hypersurfaces. \rm

\medskip

An explicit construction of the connection $\nabla$ is given in Section 6.

\bigskip

\mbox{}

\begin{center}\bf Acknowledgments \end{center}

\smallskip

We are sincerely grateful to Martin Kolar who observed that using the real defining function for the normalization of a hypersurface leads to a singular differential equation for the normalizing transformation in general, and kindly communicated this to us. We would also like to thank the anonimous referee for careful reading of the paper and useful remarks.
 
The first author is supported by the Austrian Science Fund (FWF) Grant M1413N25, and the second author is partially supported by Science Foundation Ireland (SFI) Grant 10/RFP/MTH2878.

\smallskip

\section{Modified formal normal form for finite type hypersurfaces}

In this section we slightly modify Kolar\rq{}s formal normalization procedure
and construct a similar to the one in \cite{kolar} but still different formal normal form for finite type hypersurfaces in $\CC{2}$. In the same way as in \cite{kolar}, it arises from a Chern-Moser type operator associated with a hypersurface. The main difference between the normal form in \cite{kolar} and the one in this section is that \emph{terms of as low as possible degree in $z,\bar z$} are removed from the defining function in our case. This will be used later for the proof of convergence of the normal form in the tubular case, and also for the comparison of the formal normal form with the strong normal form given by Theorems 1 and 2.

\subsection{Polynomial models for finite type hypersurfaces}

Let $M\subset\CC{2}$ be a real-analytic hypersurface of finite type $k\geq 3$ at a point $p\in M$.
As well-known fact, one can choose local holomorphic coordinates  $(z,w)=(z,u+iv)$ near $p$ in such a way that $p$ is the origin, $T_p M=\{v=0\}$, and the defining equation of $M$ looks as 
\begin{equation} \label{tangentmodel}
v=P(z,\bar z)+O(k+1)
\end{equation}
or, in terms of the complex defining function, as
\begin{equation} \label{tangentmodelc}
w=\bar w+2iP(z,\bar z)+O(k+1).
\end{equation}
Here $P(z,\bar z)$ is a non-zero real-valued homogeneous polynomial in $z,\bar z$ of degree $k$ without harmonic terms, and $O(k+1)$ denotes the sum of weighted homogeneous polynomials in $z,\bar z,u$ (resp. $z,\bar z,\bar w$) of weight $\geq k+1$, where the variables are assigned the weights 
$$[z]=[\bar z]=1,\quad [w]=[\bar w]=[u]=[v]=k.$$
The real-algebraic hypersurface 
$$K=\bigl\{v=P(z,\bar z)\bigr\}\sim \bigl\{w=\bar w+2iP(z,\bar z)\bigr\} $$
 is called \it the polynomial model for $M$ at $p$. \rm For the polynomial $P(z,\bar z)$ we use expansion of the form $P(z,\bar z)= \sum_{j=1}^{k-1} a_j z^j \bar z^{k-j},\,a_j\in\CC{},\,\bar a_j=a_{k-j}$. 
As in Kolar~\cite{kolar}, in what follows we use the important

 \bigskip
 
 \noindent\bf Notation. \rm  By $\nu$ we denote the smallest integer $1\leq j \leq k/2$ such that $a_j\neq 0$. 
 
 \bigskip
 
\noindent We then can additionally normalize $P(z,\bar z)$ by the condition $a_\nu=1$ (by means of a scaling in $z$). 

If now $M=\{v=\Phi(z,\bar z,u)\}$ and $M^*=\{v^*=\Phi^*(z^*,\bar
z^*,u^*)\}$ are two hypersurfaces of finite type $k\geq 3$ at the origin, simplified as in \eqref{tangentmodel} and having the same polynomial model $K$,
and $F$ given by
$$z^*=f(z,w),\,w^*=g(z,w)$$ 
is a formal invertible
transformation, transforming $(M,0)$ into $(M^*,0)$, we obtain the
identity
\begin{equation}
\label{tangency} \im g(z,w)|_{w=u+i\Phi(z,\bar z,u)}
=\Phi^*(f(z,w),\overline{f(z,w)},\re g(z,w))|_{w=u+i\Phi(z,\bar
z,u)}.
\end{equation}
In terms of the complex defining functions $\Theta,\Theta^*$ of $M,M^*$ respectively \eqref{tangency} looks as 
\begin{equation}\label{tangencyc}  
g(z,w)|_{w=\Theta(z,\bar z,\bar w)}
=\Theta^*(f(z,w),\bar f(\bar z,\bar w),\bar g(\bar z,\bar w))|_{w=\Theta(z,\bar z,\bar w)}
\end{equation}
(note that \eqref{tangencyc} is an identity of two power series in the free variables $z,\bar z,\bar w$). We call either of the identities \eqref{tangency},\eqref{tangencyc} \emph{the basic identity} for the transformation $F$. 

For the formal power series
$f(z,w)\in\CC{}[[z,w]]$ we consider the formal
expansion
$$f(z,w)=\sum_{m\geq 1}f_m(z,w),$$
 where each $f_m(z,w)$ is a
weighted homogeneous polynomial of weight $m$ (with respect to the above choice of weights), and similarly for $g(z,w)$. For the defining function $\Phi(z,\bar z,u)$ we use expansion of the form 
$$\Phi(z,\bar z,u)=P(z,\bar z)+\sum\limits_{j,l\geq 0} \Phi_{jl}(u) z^j\bar z^l$$ 
(note that each term in the latter sum has weight $\geq k+1$).
We also denote for each $m\geq 2$ 
$$\Phi^{(m)}(z,\bar z,u):=\sum\limits_{j+l=m} \Phi_{jl}(u) z^j\bar z^l.$$
Similarly, for the complex defining function $\Theta(z,\bar z,\bar w)$ we use expansion of the form 
$$\Theta(z,\bar z,\bar w)=\bar w+2iP(z,\bar z)+\sum\limits_{j,l\geq 0} \Theta_{jl}(\bar w) z^j\bar z^l.$$ 
We also use for $m\geq 2$ the notation 
$$\Theta^{(m)}(z,\bar z,\bar w):=\sum\limits_{j+l=m} \Theta_{jl}(\bar w) z^j\bar z^l.$$
We then consider three cases, in each of which the normal form is significantly different.  

\smallskip

\noindent {\bf Tubular case.} In this case $\nu=1$, and the polynomial $P(z,\bar z)$ has the form $$P(z,\bar z)=\frac{1}{k}\Bigl[(z+\bar z)^k-z^k-\bar z^k\Bigr].$$
The polynomial model $K$ in  this case is polynomially equivalent to a tubular real hypersurface. In addition, $K$ is invariant under the Lie group \eqref{dilations}
of transformations, preserving the origin. An inspection of low weight terms in \eqref{tangency} (see \cite{kolar}) shows that the initial transformation $F=(f,g)$ can be uniquely decomposed as $F=\tilde F\circ\Lambda$, where $\Lambda$ is as in \eqref{dilations} and the weighted components of the new mapping $\tilde F$ satisfy
\begin{equation}
\label{normalmaptube} f_1=z,\quad g_1=\dots=g_{k-1}=0,\quad g_k=w.
\end{equation}
We now say that a formal hypersurface $(M,0)$ with a tubular polynomial model \it is in normal form, \rm if its defining function $\Phi(z,\bar z,u)$ satisfies 

\begin{equation}
\begin{aligned}\label{normalformtube}
\Phi_{\alpha 0}=0,\,\alpha\geq 0,\quad \Phi_{\alpha 1}=0,\,\alpha\geq k-1,\\
\re\Phi_{k-2,1}=  
 \re \Phi_{k,k-1}=\im\Phi_{2k-2,2}=0.
 \end{aligned}
\end{equation}

\smallskip

\noindent {\bf Circular case.} In this case $k$ is even, $\nu=k/2$, and the polynomial $P(z,\bar z)$ has the form $$P(z,\bar z)=z^\nu\bar z^\nu.$$
The polynomial model $K$ in  this case can be mapped into the quadric $\mathcal Q=\bigl\{v=|z|^2\bigr\}$ by means of the polynomial map $z\mapsto z^\nu,\,w\mapsto w$, and hence is spherical at Levi nondegenerate points. It is also invariant under the Lie group \eqref{biggroup}
of transformations, preserving the origin. An inspection of low weight terms in \eqref{tangency} (see \cite{kolar}) shows that the initial transformation $F=(f,g)$ can be uniquely decomposed as $F=\tilde F\circ\Lambda$, where $\Lambda$ is as in \eqref{biggroup} and the weighted components of the new mapping $\tilde F$ satisfy
\begin{equation}
\label{normalmapcirc} f_1=z,\quad g_1=\dots=g_{k-1}=0,\quad g_k=w,\quad \re g_{ww}(0,0)=0.
\end{equation}
We now say that a formal hypersurface $(M,0)$ with a circular polynomial model \it is in normal form, \rm if its complex defining function $\Theta(z,\bar z,\bar w)$ satisfies 

\begin{multline}\label{normalformcirc}
\im\Theta_{00}=0,\,\Theta_{0\alpha}=0,\,\alpha\geq 1,\quad 
\Theta_{\nu\alpha }=0,\,\alpha\ge \nu,\\
 \Theta_{\nu,\nu-1}=\im\Theta_{\nu,\nu}=\im\Theta_{2\nu,2\nu}=\im\Theta_{3\nu,3\nu}=0 
\end{multline}
(note that, unlike the real defining function case, for the symmetric terms $\Theta_{jj}(u)$ neither $\re\Theta_{jj}(u)$ nor $\im\Theta_{jj}(u)$ vanish in general!).
\smallskip

\noindent {\bf Generic case.} In this case the polynomial model $K$ is neither tubular nor circular, and we call it (following Kolar ~\cite{kolar}) \it generic \rm in what follows. In the same way as in the tubular case, $K$ is invariant under the Lie group \eqref{dilations}
of transformations, preserving the origin, and an inspection of low weight terms in \eqref{tangency}  shows that the initial transformation $F=(f,g)$ can be uniquely decomposed as $F=\tilde F\circ\Lambda$, where $\Lambda$ is as in \eqref{dilations} and the weighted components of the new mapping $\tilde F$ satisfy \eqref{normalmaptube}. To describe the normal form in the generic case, we need to introduce (see also \cite{kolar}) a natural Hermitian form on each space $\mathbb H_{m}$ of homogeneous (but not necessarily real-valued) polynomials in $z,\bar z$ of degree $m\geq 2$. Namely, if 
$Q=\sum_{j=0}^{m} a_j z^j \bar z^{m-j} \in \mathbb H_{m}, \,  R=\sum_{j=0}^{m} b_j z^j \bar z^{m-j} \in \mathbb H_{m},\, a_j,b_j\in\CC{}$, we define
\begin{equation}\label{scalar}
(Q,R):=\sum\limits_{j=1}^{m-1} a_j\bar b_j.
\end{equation}
This Hermitian form \eqref{scalar} is degenerate, however, it is nondegenerate on the subspace  
$$\mathcal L_k\subset\mathcal H_k$$
 consisting of polynomials without harmonic terms (thus \eqref{scalar} is a scalar product on each $\mathcal L_k$).
We now say that a formal hypersurface $(M,0)$ with a generic polynomial model \it is in normal form, \rm if its complex defining function $\Theta(z,\bar z,\bar w)$ satisfies 

\begin{equation}\label{normalformgeneric}
\begin{aligned}
\im\Theta_{00}=0,\,\Theta_{0\alpha }=0,\,\alpha\geq 1,\quad \Theta_{\nu\alpha}=0,\,\alpha\geq k-\nu,\\
\re\Theta_{2\nu,2k-2\nu}=0,\quad \bigl(\Theta^{(k-1)},P_z\bigr)=0.
\end{aligned}
\end{equation}

\subsection{Modified Kolar\rq{}s normal form}   We now formulate our formal normalization result
 on the modified Kolar's normal form.

\begin{propos} Let $M\subset\CC{2}$ be a real-analytic hypersurface of finite type $k\geq 3$ at a point $p$, simplified as above, and  $K=\bigl\{v=P(z,\bar z)\bigr\}$  its polynomial model at $p$.  

\smallskip

(i) Let the polynomial model $K$ be of tubular type. Then for any choice of a non-zero real parameter $\lambda$ there exists a unique formal invertible transformation $F=(f,g)$, bringing $(M,0)$ into a normal form \eqref{normalformtube} and satisfying $f_z(0,0)=\lambda$.  

\smallskip

(ii) Let the polynomial model $K$ be of circular type. Then for any choice of a non-zero real parameter $\lambda$ and real parameters $\theta,\rho$ there exists a unique formal invertible transformation $F=(f,g)$, bringing $(M,0)$ into a normal form \eqref{normalformcirc} and satisfying $f_z(0,0)=\lambda e^{i\theta},\,\re g_{ww}(0,0)=\rho$.  

\smallskip

(iii) Let the polynomial model $K$ be of generic type. Then for any choice of a non-zero real parameter $\lambda$ there exists a unique formal invertible transformation $F=(f,g)$, bringing $(M,0)$ into a normal form \eqref{normalformgeneric} and satisfying $f_z(0,0)=\lambda$.

\end{propos}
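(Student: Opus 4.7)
The plan is to solve the basic identity \eqref{tangency} (or equivalently \eqref{tangencyc}) for the transformation $F=(f,g)$ weight by weight, following Kolar's scheme but adapted to the modified choice of which coefficient monomials are normalized to zero. The preliminary decomposition $F=\tilde F\circ\Lambda$ already discussed in the text reduces the problem to finding $\tilde F$ with the standard initial data \eqref{normalmaptube} (tubular and generic cases) or \eqref{normalmapcirc} (circular case). The parameters $\lambda$ (and $\theta,\rho$ in the circular case) specify $\Lambda$ uniquely, so it suffices to show that once this initial data of $\tilde F$ is fixed, all remaining weighted components $f_m,g_m$ of $\tilde F$ are uniquely determined by the normalization conditions on the target.

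The argument proceeds by induction on the weight $m\geq k+1$. Assume $f_j,g_j$ for $j<m$, as well as $\Phi^{*(l)}$ (resp.\ $\Theta^{*(l)}$) for $l<m$, have already been uniquely determined so that the latter satisfy the required normal form. Extracting the weight-$m$ component of the basic identity and collecting all previously-determined data on one side yields a Chern--Moser type linear equation
$$
\mathcal L_m(f_m,g_m) + \Phi^{*(m)} = R_m,
$$
where $R_m$ is a weighted homogeneous polynomial of weight $m$ depending only on $P$ and on the data at lower weights, and $\mathcal L_m$ is the linearization of the tangency condition along the model $K$. The task is to show that, subject to the normalization conditions on $\Phi^{*(m)}$ and the residual vanishing conditions on $f_m,g_m$ inherited from \eqref{normalmaptube}/\eqref{normalmapcirc}, this equation has exactly one solution. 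Analyzing $\mathcal L_m$ on each weighted homogeneous space, one identifies its image (those coefficients that can be killed by a transformation) and its cokernel; the specific vanishing conditions in \eqref{normalformtube}, \eqref{normalformcirc}, and \eqref{normalformgeneric} are chosen precisely so that a polynomial satisfies them if and only if it lies in a prescribed complement to the image of $\mathcal L_m$, while the initial data conditions kill its kernel on the space of admissible $(f_m,g_m)$. Unique solvability then follows from linear algebra.

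The main obstacle lies in the case-by-case bookkeeping that the normalization conditions really do cut out a complement to the image of $\mathcal L_m$ in each weighted homogeneous space. In the tubular case this is relatively direct, since the action of \eqref{dilations} kills a one-parameter family and the extra conditions $\re\Phi_{k,k-1}=\im\Phi_{2k-2,2}=0$ exactly absorb the remaining ambiguity. The circular case requires the larger group \eqref{biggroup} and correspondingly the additional conditions $\Theta_{\nu,\nu-1}=\im\Theta_{\nu,\nu}=\im\Theta_{2\nu,2\nu}=\im\Theta_{3\nu,3\nu}=0$, whose compatibility with $\mathcal L_m$ is checked by direct computation using the model $v=|z|^k$. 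The generic case is the most delicate: one must verify that the conditions $\Theta_{\nu\alpha}=0$ for $\alpha\geq k-\nu$, together with $\re\Theta_{2\nu,2k-2\nu}=0$ and $(\Theta^{(k-1)},P_z)=0$, form a cokernel complement for $\mathcal L_m$ in the relevant weight; this is precisely where the Hermitian pairing \eqref{scalar} is invoked, since its nondegeneracy on $\mathcal L_k$ guarantees that the final orthogonality condition isolates a unique representative. The key difference from Kolar's original procedure is that we eliminate monomials $z^\alpha\bar z^\beta$ of the \emph{smallest} possible total degree $\alpha+\beta$ rather than the largest, which reshuffles the bookkeeping but leaves the underlying solvability analysis structurally the same.
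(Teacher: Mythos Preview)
Your proposal is correct and follows essentially the same approach as the paper: reduce via $F=\tilde F\circ\Lambda$ to a map with normalized initial data, then solve the basic identity weight by weight, reducing the induction step to a direct sum decomposition $\mathcal F=L(\mathcal G)\oplus\mathcal N$ (or its complex analogue) for the Chern--Moser operator $L$. The paper makes this explicit by collecting terms of each bi-degree $(\alpha,\beta)$ in $z,\bar z$ and solving the resulting linear systems case by case---precisely the ``bookkeeping'' you allude to---including the Cauchy--Schwarz argument with the pairing \eqref{scalar} in the generic case to handle the $f_0$ component.
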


\begin{rema}
We emphasize that this normal form is merely formal to begin with.
It does coincide with the normal form in Theorem~1, implying its convergence in that case. However, it does not, in general, coincide with the normal form in Theorem~2. The reason being that, in general, this formal normal form
may not transfer the distinguished degenerate chain $\gamma$
 into the line \eqref{Gamma}. Instead, in Theorem~2 
 we construct another convergent normal form
 trasfering the degenerate chain into the line \eqref{Gamma}
 and having fewer conditions on the defining equation of the hypersurface.
\end{rema}

The strategy of the proof of Proposition 2.1 is similar to that used in Kolar~\cite{kolar}. As follows from the discussion above, it is sufficient to prove the claim of Proposition 2.1 for the case of a map $F$, satisfying \eqref{normalmaptube} in the tubular and generic cases, and \eqref{normalmapcirc} in the circular case, so that we assume $F$ to be normalized accordingly. We also use for the defining functions $\Phi,\Theta$ of $M$ expansion of the form 
$$\Phi(z,\bar z,u)=P(z,\bar z)+\sum\limits_{m\geq k+1}\Phi_m(z,\bar z,u),\quad \Theta(z,\bar z,\bar w)=\bar w+2iP(z,\bar z)+\sum\limits_{m\geq k+1}\Theta_m(z,\bar z,\bar w),$$
where all  $\Phi_m(z,\bar z,u),\Theta_m(z,\bar z,\bar w)$ are weighted homogeneous polynomials of weight $m$. 
We then consider \eqref{tangency} as an
infinite series of weighted homogeneous equations, which can be
written for any fixed weight $m\geq k+1$ as
\begin{equation}
\label{CM}\re\left(ig_{m}+P_zf_{m-k+1}\right)\left|_{w=u+iP(z,\bar z)}\right.=\Phi^*_m-\Phi_m+\cdot\cdot\cdot,
\end{equation}
where dots stands for a polynomial in $z,\bar z,u$ and
 $f_{j-k+1},g_{j}$ with $j<m$ and their
derivatives in $u$ (here
$f_{j-k+1}=f_{j-k+1}(z,u),\,g_j=g_{j}(z,u),\,\Phi_j=\Phi_j(z,\bar
z,u))$. Similarly, for the identity \eqref{tangencyc} we get
\begin{equation}
\label{CMc} g_{m}(z,w)-\bar g_m(\bar z,\bar w)-2iP_zf_{m-k+1}(z,w)-2iP_{\bar z}\bar f_{m-k+1}(\bar z,\bar w)\left|_{w=\bar w+2iP(z,\bar z)}\right.=\Theta^*_m-\Theta_m+\cdot\cdot\cdot.
\end{equation}

Let us denote by $\mathcal F$ the space of formal real-valued
power series $\Phi(z,\bar z,u)=\sum_{m\geq k+1}\Phi_m(z,\bar z,u)$, and by $\mathcal N\subset\mathcal F$ the subspace of formal series, satisfying  the normalization conditions  
\eqref{normalformtube}. Similarly, we use the notation $\mathcal F^c$ for power series of the form $\sum_{m\geq k+1}\Theta_m(z,\bar z,\bar w)$, and $\mathcal N^c\subset\mathcal F^c$ for the subspace of formal series, satisfying  the normalization conditions \eqref{normalformcirc} or \eqref{normalformgeneric}, depending on the polynomial model under consideration. Denote also by $\mathcal G$ the spaces of pairs of the form $\{(f,g)-\mbox{Id}\}$, where $f,g$ are
 formal power series without constant term, satisfying
\eqref{normalmaptube} or \eqref{normalmapcirc}, also depending on the polynomial model under consideration.
%Let us decompose the space $\mathcal F$ of formal real-valued
%power series $\Phi(z,\bar z,u)=\sum_{m\geq 4}\Phi_m(z,\bar z,u)$,
%satisfying \eqref{simplified}, into the direct sum $\mathcal
%R\oplus\mathcal N$, where $\mathcal N\subset\mathcal F$ is the
%subspace of series satisfying the normalization conditions
%\eqref{normalform}, and $\mathcal R\subset\mathcal F$ is the range
%of the $\RR{}$-linear operator $P:\,\mathcal F\mapsto \mathcal F$,
%$P^2=P$, defined by
%\begin{eqnarray*}P(\Phi):=\sum\limits_{k\geq
%0}(\Phi_{k0}(u)z^k+\Phi_{0k}(u)\bar z^k)+ \sum\limits_{k\geq
%2}(\Phi_{k1}(u)z^k\bar z+\Phi_{1k}(u)z\bar z^k)+\\+ (i\im
%\Phi_{32}(u)z^3\bar z^2+i\im\Phi_{23}(u)z^2\bar z^3)+(\re
%\Phi_{42}(u)z^4\bar z^2+\re \Phi_{24}(u)z^2\bar z^4\end{eqnarray*}
%($P$ is the projection in $\mathcal F$ onto the subspace $\mathcal
%R$).
In view of \eqref{CM}, in order to prove Proposition 2.1, it is sufficient to prove
the following proposition.

\begin{propos}
 Let $M$ be a real-analytic hypersurface of finite type $k\geq 3$ at a point $p$, simplified as above,   $K=\bigl\{v=P(z,\bar z)\bigr\}$  its polynomial model at $p$,  $L$ denotes the linear operator 
$$ L(f,g):=\re\left(ig+P_z f\right)\left|_{w=u+iP(z,\bar z)}\right.,$$  
and
$L^c$  the linear operator 
$$ L^c(f,g):=g(z,w)-\bar g(\bar z,\bar w)-2iP_zf(z,w)-2iP_{\bar z}\bar f(\bar z,\bar w)\left|_{w=\bar w+2iP(z,\bar z)}\right.$$ 
Then in the tubular case
  we have
the direct sum decomposition $\mathcal F=L(\mathcal
G)\oplus\mathcal N$, and  in either of the  circular or generic cases the direct sum decomposition $\mathcal F^c=L^c(\mathcal
G)\oplus\mathcal N^c$.
\end{propos}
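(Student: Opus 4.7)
The plan is to reduce Proposition 2.2 to a family of finite-dimensional linear-algebra problems, one at each weight. Both operators $L$ and $L^c$ preserve weighted homogeneity, since the substitutions $w=u+iP(z,\bar z)$ and $w=\bar w+2iP(z,\bar z)$ respect the weighting $[z]=[\bar z]=1$, $[w]=[\bar w]=[u]=k$. Hence the claimed decomposition splits as a direct sum over weights $m\geq k+1$, and it suffices to establish, for each such $m$, the decomposition of the weight-$m$ components: in the tubular case
$$\mathcal{F}^{(m)}=L\bigl(\mathcal{G}^{(m)}\bigr)\oplus\mathcal{N}^{(m)},$$
and analogously for $L^c$, $\mathcal{F}^c$, $\mathcal{N}^c$ in the circular and generic cases, where $\mathcal{G}^{(m)}$ consists of pairs $(f_{m-k+1},g_m)$ subject to \eqref{normalmaptube} or \eqref{normalmapcirc} as appropriate. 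All spaces are finite-dimensional, so the decomposition is equivalent to (a) injectivity of $L$ modulo $\mathcal{N}$ on $\mathcal{G}^{(m)}$ and (b) the dimension identity $\dim L(\mathcal{G}^{(m)})+\dim\mathcal{N}^{(m)}=\dim\mathcal{F}^{(m)}$.

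To analyze $L$ at weight $m$ I would Taylor-expand the substitution $w=u+iP$. The resulting formula exhibits a triangular structure with respect to the $u$-filtration: its leading term at $u=0$ equals $\re\bigl(ig_m(z,0)+P_z(z,\bar z)f_{m-k+1}(z,0)\bigr)$, while all higher-order terms in $u$ are recursively driven by lower-order coefficients together with derivatives of $P$. The leading term splits further into the \emph{harmonic} piece $\re(ig_m(z,0))$, containing only monomials $z^\alpha$ and $\bar z^\alpha$, and the \emph{mixed} piece $\re(P_z f_{m-k+1}(z,0))$. In the tubular case, the harmonic piece is eliminated by the conditions $\Phi_{\alpha 0}=0$ of \eqref{normalformtube} through a suitable choice of the coefficients of $g_m(z,0)$, while the mixed piece is normalized by the conditions on $\Phi_{\alpha 1}$, $\alpha\geq k-1$, on $\re\Phi_{k-2,1}$, $\re\Phi_{k,k-1}$, and on $\im\Phi_{2k-2,2}$, via multiplication of the coefficients of $f_{m-k+1}(z,0)$ by $P_z$. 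Injectivity follows because $P_z$ is a nonzero polynomial and multiplication by $P_z$ is injective on polynomials, which decouples the roles of $f$ and $g$; surjectivity is then obtained dually by recursively solving, in the $u$-filtration, for the coefficients of $(f,g)$ that cancel the non-normalized components of any given $\Phi_m$. The circular and generic cases proceed analogously via $L^c$ with the $\bar w$-filtration in place of the $u$-filtration, using the respective explicit forms of $P_z$ (a monomial in the circular case, and $\nu z^{\nu-1}\bar z^{k-\nu}+\cdots$ in the generic case); the additional condition $\bigl(\Theta^{(k-1)},P_z\bigr)=0$ of \eqref{normalformgeneric} is accommodated via the nondegeneracy of the Hermitian form \eqref{scalar} on the subspace $\mathcal{L}_{k-1}$.

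The main obstacle is the bookkeeping of the leading-order matching in the circular case, where the normalization \eqref{normalformcirc} singles out real or imaginary parts of the three diagonal coefficients $\Theta_{\nu\nu}$, $\Theta_{2\nu,2\nu}$, $\Theta_{3\nu,3\nu}$. One must identify precisely which coefficients of $g$ (and its $\bar w$-derivatives) under $L^c$ produce these diagonal terms while remaining compatible with the extra condition $\re g_{ww}(0,0)=0$ of \eqref{normalmapcirc}, and then verify that the resulting dimension count matches. This step is specific to the ``lowest-degree'' character of the modified normalization used in this section and is the primary way in which the present argument diverges from the original one in \cite{kolar}.
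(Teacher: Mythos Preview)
Your reduction to fixed weight is correct and matches the paper, but the linear algebra at each weight is where all the content lies, and your sketch has two concrete problems that prevent it from going through.

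First, the triangularity you claim with respect to the $u$-filtration is wrong as stated. After substituting $w=u+iP$, the $u^0$ part of $g_m(z,u+iP)$ is \emph{not} $g_m(z,0)$: if $g_m=\sum_j c_j z^{m-kj}w^j$, then setting $u=0$ gives $\sum_j c_j z^{m-kj}(iP)^j$, which involves \emph{every} coefficient $c_j$, not just $c_0$. The recursion you describe therefore does not start. A triangularity exists, but in the opposite direction (highest $u$-power first), and even then it does not align with the normal-form conditions, which are indexed by bi-degree in $(z,\bar z)$.

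Second, the injectivity argument ``multiplication by $P_z$ is injective, which decouples $f$ and $g$'' is much too coarse. The operator involves $\re(\cdot)$, which couples $f$ with $\bar f$ and $g$ with $\bar g$, and the specific normal-form conditions pick out bi-degrees where these couplings are nontrivial. In the paper's proof one organizes by $\bar z$-degree (or $z$-degree for $L^c$): the $(\alpha,0)$ terms fix most of $g$, the $(\alpha,1)$ terms fix most of $f$, but several residual unknowns ($\im f_0$, $\re g_{k-1}$, $\re f_k$, $f_1$, $\re g_0$ in the tubular case) remain entangled. Resolving them requires extracting the specific bi-degrees $(k-1,0)$, $(2k-2,1)$, $(k,k-1)$, $(k-1,1)$, $(2k-2,2)$ and checking that the resulting real linear system (a genuine $3\times 3$ block among them) is nondegenerate for all $k\ge 3$. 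In the generic case the analogous residual step needs the strict Cauchy--Schwarz inequality $|(\bar Q,Q)|<(Q,Q)$ for $Q=P_z$ modulo harmonic terms, which holds precisely because the model is \emph{not} tubular; merely citing nondegeneracy of the Hermitian form is not enough. In the circular case the three diagonal conditions on $\Theta_{\nu\nu},\Theta_{2\nu,2\nu},\Theta_{3\nu,3\nu}$ lead to a coupled system in $\re g_0,\re f_1,\im f_1$ involving up to third derivatives, compatible with the extra constraint $\re g_{ww}(0,0)=0$; you identify this as the main obstacle but do not address it.

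In short, the paper's proof is an explicit bi-degree computation with case-specific nondegeneracy checks; these checks are the substance of the argument and do not follow from the general considerations in your outline.
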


Indeed, consider, for example, the tubular case. Then it follows from Proposition 2.3  that if $m\geq k+1$ is an
integer and all $f_{j-k+1},g_{j},\Phi^*_j$ with $j<m$ are already
determined, then one can uniquely choose the collection
$(f_{m-k+1},g_{m},\Phi^*_m)$ in such a way that \eqref{CM} is
satisfied and $\Phi^*_m\in\mathcal N$. This implies the existence
and uniqueness of the desired normalized mapping $(f,g)$ and the
normalized right-hand side $\Phi^*(z,\bar z,u)$. For the other cases the argument is analogues.

\begin{proof}[Proof of Proposition 2.3] 

 \mbox{}
 
 \medskip

\noindent \bf Tubular case. \rm The statement of the proposition is equivalent to the fact that an
equation 
\begin{equation}\label{Lfg}
L(f,g)=\Psi(z,\bar z,u),\quad(f,g)\in\mathcal G
\end{equation}
 in $(f,g)$
has a unique solution, modulo $\mathcal N$ in the right-hand side,
for any fixed $\Psi\in\mathcal F$. To simplify the calculations,
we rescale \eqref{Lfg} as
\begin{equation}
\label{cm-eq} 2L(f,g)=\Psi(z,\bar z,u),\quad (f,g)\in\mathcal
G,\,\Psi\in\mathcal F,
\end{equation}
which we solve in $f,g$. We use expansions of the form
$$f(z,u+iP(z,\bar z))=f(z,u)+f_u(z,u)iP+\frac{1}{2}f_{uu}(z,u)i^2P^2+\cdot\cdot\cdot.$$ Substituting into \eqref{cm-eq} we get
the equation
\begin{multline}
\label{cm-eq1} i\left(g(z,u)+g_u(z,u)iP+\frac{1}{2}g_{uu}(z,u)i^2P^2+\cdot\cdot\cdot\right)+\\
 +P_z\left(f(z,u)+f_u(z,u)iP+\frac{1}{2}f_{uu}(z,u)i^2P^2+\cdot\cdot\cdot\right)+\\
+ \,\{\mbox{complex conjugate terms}\}=\Psi(z,\bar z,u).
\end{multline}
For the sequel of the proof, we expand $f(z,w)$ as $f=\sum\limits_{k\geq 0}f_k(w)z^k,$ and
similarly for $g$. 

   Collecting in \eqref{cm-eq1} terms of
bi-degrees $(\alpha,0),\,\alpha\geq 0$ in $z,\bar z$, we get
\begin{equation}
\label{k0} ig(z,u)-i\bar g_0(u)+z^{k-1}\bar f_0(u)=\sum_{\alpha\geq 0}\Psi_{\alpha 0}(u)z^\alpha.
\end{equation}
The equation \eqref{k0} enables us to determine uniquely the functions $g_\alpha(u),\,\alpha\neq 0, k-1$, as well as the function $\im g_0(u)$, in such a way that the
conditions $\Psi_{\alpha 0}=0,\,\alpha\neq k-1$ are achieved. In addition, we have 
\begin{equation}\label{k-1,0}
ig_{k-1}+\bar f_0=\Psi_{k-1,0}.
\end{equation} 
Gathering then terms of bi-degrees $(\alpha,1)$ with $\alpha\geq 1$, we get
\begin{equation}
\label{k1}
-g_uz^{k-1}-\bar g_0\rq{}z^{k-1}+(k-1)z^{k-2}f+z^{k-1}\bar f_1+(k-1)z^{k-2}\bar f_0-iz^{2k-2}\bar f_0\rq{}=\sum_{\alpha\geq 1}\Psi_{\alpha 1}(u)z^\alpha.
\end{equation}
The equation \eqref{k1} enables us to determine uniquely all $f_\alpha,\,\alpha\neq 0,1,k,$ as well as 
$\re f_0$, in order to achieve the conditions $\Psi_{\alpha 1}=0,\,\alpha\geq k,\,\alpha\neq 2k-2$ and $\re\Psi_{k-2,1}=0$. Thus we uniquely determine from \eqref{k-1,0} $\im g_{k-1}$ and achieve $\re\Psi_{k-1,0}=0$. In addition, we have the conditions 
\begin{equation}\label{k-1,1}
-2\re g_0\rq{}+(k-1)f_1+\bar f_1=\Psi_{k-1,1}
\end{equation}
and
\begin{equation}\label{2k-2,1}
-g_{k-1}\rq{}+(k-1)f_k-i\bar f_0\rq{}=\Psi_{2k-2,1}.
\end{equation}
By considering the imaginary part in \eqref{2k-2,1} we uniquely determine $\im f_k$ and achieve the condition $\im\Phi_{2k-1,1}=0$.  

In what follows we use the notation 
$$\{\mbox{\bf pdt}\}$$
 for a linear differential expression, depending on previously determined terms. Considering then in \eqref{cm-eq1} terms of bi-degree $(2k-2,2)$, we obtain 
\begin{equation}\label{2k-2,2} 
\{\mbox{\bf pdt}\}+i(k-1)f_1\rq{}-i\bar f_1\rq{}=\Psi_{2k-2,2}.
\end{equation}
Recall that, in view of \eqref{normalmaptube}, $f_1\rq{}(0)=g_0\rq{}(0)=1$. Hence the imaginary part of the equation \eqref{2k-2,2}, read together with the equation \eqref{k-1,1}, enables us to determine $\re g_0,f_1$ uniquely and achieve the conditions $\Psi_{k-1,1}=\im\Psi_{2k-2,2}=0$.
Finally, we consider in \eqref{cm-eq1} terms of bi-degree $(k,k-1)$, we obtain 
\begin{equation}\label{k,k-1}
-g_{k-1}\rq{}+f_k-ic\bar f_0\rq{}=\Psi_{k,k-1},
\end{equation}
where $c=c(k)$ is a positive integer. Then, by considering the imaginary part in \eqref{k-1,0} and the real parts in \eqref{2k-2,1} and \eqref{k,k-1}, we get a $3\times 3$ real linear system for $\im f_0\rq{},\re g_{k-1}\rq{},\re f_k$, which is nondegenerate for any positive $c$ and $k\geq 3$. Thus we determine 
$\im f_0,\re g_{k-1},\re f_k$ uniquely (thanks to $f_0(0)=0$) and achieve $\im\Psi_{k-1,0}=\re\Psi_{2k-2,1}=\Psi_{k,k-1}=0$. 

All the terms, concerned in the normalization conditions \eqref{normalformtube}, have been considered, and this proves the proposition in the tubular case.

\smallskip

\noindent \bf Circular case. \rm  For the circular and the generic cases we have to replace the equation \eqref{cm-eq} by
\begin{equation}\label{Lfg1}
L^c(f,g)=\Psi(z,\bar z,\bar w),\quad(f,g)\in\mathcal G
\end{equation}
 and prove that it  
has a unique solution in $(f,g)$, modulo $\mathcal N^c$ in the right-hand side,
for any fixed $\Psi\in\mathcal F^c$. We also replace
the equation \eqref{cm-eq1} by
\begin{multline} \label{cm-eq2}
 -\bar g(\bar z,\bar w)+g(z,\bar w)+2ig_w(z,\bar w)P+2i^2g_{ww}(z,\bar w)P^2+\cdot\cdot\cdot-\\
 -2i\Bigl[P_{\bar z}\bar f(\bar z,\bar w) -P_zf(z,\bar w)-2if_w(z,\bar w)P_zP-2i^2f_{ww}(z,\bar w)P_zP^2-\cdot\cdot\cdot\Bigr]
=\Psi(z,\bar z,\bar w).
\end{multline}

Collecting in \eqref{cm-eq2} terms of
bi-degrees $(0,\alpha),\,\alpha\geq 0$ in $z,\bar z$, we get
\begin{equation}
\label{k0ii} -\bar g(\bar z,\bar w)+ g_0(\bar w)=\sum_{\alpha\geq 0}\Psi_{0 \alpha}(\bar w)z^\alpha.
\end{equation}
The equation \eqref{k0ii} enables us to determine uniquely the functions $g_\alpha(\bar w),\,\alpha>0$, as well as the function $ \im g_0(u)$, in such a way that the
conditions $\Psi_{0\alpha}=0,\,\alpha\geq 1$ and $\im\Psi_{00}=0$ are achieved. 

Gathering then terms of bi-degrees $(\nu,\alpha)$ with $\alpha\geq 1$, we get
\begin{equation}
\label{k1ii}
2ig_0\rq{}\bar z^{\nu}-2i\nu\bar z^{\nu-1} \bar f(\bar z,\bar w)-2i\nu \bar z^\nu f_1=\sum_{\alpha\geq 1}\Psi_{\nu\alpha }(\bar w)\bar z^\alpha.
\end{equation}
The equation \eqref{k1ii} enables us to determine uniquely all $f_\alpha,\,\alpha\neq 1$, in order to achieve the conditions $\Psi_{\nu\alpha}=0,\,\alpha\geq \nu+1,$ and $\Psi_{\nu-1,\nu}=0$ (recall that we have $\nu=k-\nu$ in the circular case).  In addition, we have the condition
\begin{equation}\label{kk}
-2i\nu( f_1+\bar f_1)+2i g_0\rq{}=\Psi_{\nu\nu}.
\end{equation}

Considering then in \eqref{cm-eq2} terms of bi-degrees $(2\nu,2\nu)$ and $(3\nu,3\nu)$, we get, respectively,
\begin{equation}\label{2k2k}
-2 g_0\rq{}\rq{}-4\nu f_1\rq{}=\Psi_{2\nu,2\nu}
\end{equation}
and
\begin{equation}\label{3k3k}
-\frac{4i}{3} g_0\rq{}\rq{}\rq{}-2i\nu f_1\rq{}\rq{}=\Psi_{3\nu,3\nu}.
\end{equation}
Recall that, thanks to \eqref{normalmapcirc}, we have $g_0(0)=g_0\rq{}(0)=\im f_1\rq{}(0)=\re g_0\rq{}\rq{}(0)=0$ and $\re f_1\rq{}(0)=1$. Hence we determine $\im f_1$ uniquely from \eqref{2k2k} and achieve $\im\Psi_{2\nu,2\nu}=0$, and also determine $\re g_0,\re f_1$ uniquely from \eqref{kk},\eqref{3k3k} and achieve $\im\Psi_{\nu,\nu}=\im\Psi_{3\nu,3\nu}=0$. 

All the terms, concerned in the normalization conditions \eqref{normalformcirc}, have been considered, and this proves the proposition in the circular case.

\smallskip

\noindent \bf Generic case. \rm Collecting in \eqref{cm-eq2} terms of
bi-degrees $(0,\alpha),\,\alpha\geq 0,\,\alpha\neq k-1$ in $z,\bar z$, we get
\begin{equation}
\label{k0iii}-\bar g(\bar z,\bar w)+ g_0(\bar w)=\sum_{\alpha\neq k-1}\Psi_{0\alpha}(\bar w)\bar z^\alpha.
\end{equation}
 The equation \eqref{k0iii} enables us to determine uniquely the functions $g_\alpha(u),\,\alpha\neq 0;k-1$, as well as the function $\im g_0(u)$, in such a way that the
conditions $\Psi_{\alpha 0}=0,\,\alpha\neq k-1; 0$,  and $\im\Psi_{00}=0$ are all achieved. 

Let us introduce the unique polynomial $Q\in\mathcal L_k$ ($\mathcal L_k$ being the space of polynomials without harmonic terms as before), which coincides with $P_z$ modulo harmonic terms. We have $Q,\bar Q\in\mathcal L_k$. We then consider in \eqref{cm-eq2} 
all terms of bi-degrees $z^\alpha\bar z^\beta$, $\alpha+\beta=k-1$, and apply to both sides the multiplication by $Q$ with respect to the Hermitian form \eqref{scalar}. We get: 
\begin{equation}\label{k-1}
-2if_0(Q,Q)-2i\bar f_0(\bar Q,Q)=\bigl(\Psi^{(k-1)},Q\bigr).
\end{equation}
Since $(\cdot,\cdot)$ is nondegenerate on $\mathcal L_k$, by the Cauchy inequality we have 
$$|(\bar Q,Q)|\leq \sqrt{(Q,Q)}\cdot\sqrt{(\bar{Q},\bar{Q})}=(Q,Q),$$ 
and the equality achieved if and only if $Q$ is a scalar multiple of $\bar Q$. It is easy to see that in the latter case the polynomial model $K=\bigl\{v=P(z,\bar z)\bigr\}$ is tubular, which is a contradiction. 
Thus $(Q,Q)>|(\bar Q,Q)|$, and the equation \eqref{k-1} enables us to determine $f_0$ uniquely and
achieve the condition $(\Psi^{(k-1)},P_z)=0$. Now, by considering in \eqref{cm-eq2} terms of bi-degree $(0,k-1)$, we obtain 
$$-\bar g_{k-1}+\{\mbox{\bf pdt}\}=\Psi_{0,k-1},$$
and we determine from here $g_{k-1}$ uniquely and achieve the condition $\Psi_{0,k-1}=0$.

Gathering then terms of bi-degrees $(\nu,\alpha)$ with $\alpha\geq k-\nu$, we get
\begin{equation}
\label{k1iii}
2i g_0\rq{}\cdot \bar z^{k-\nu}-2i(k-\nu)\bar z^{k-\nu-1}(\bar f-\bar f_0)-2i\nu \bar z^{k-\nu} f_1+\{\mbox{\bf pdt}\}=\sum_{\alpha\geq k-\nu}\Psi_{\nu\alpha}(u)z^\alpha.
\end{equation}
The equation \eqref{k1iii} enables us to determine uniquely all $f_\alpha,\,\alpha>1$, and achieve the conditions $\Psi_{\alpha \nu}=0,\,\alpha> k-\nu$. In addition, we have the condition 
\begin{equation}\label{f1}
2i g_0\rq{}-2i(k-\nu)\bar f_1-2i\nu\bar f_1=\Psi_{\nu,k-\nu}.
\end{equation}
Note that, since the polynomial model $K$ is not circular, we have $\nu<k/2$, so that we determine 
$\im f_1$ uniquely by considering the imaginary part of the equation \eqref{f1} and achieve $\re\Psi_{\nu,k-\nu}=0$. Finally, we consider in \eqref{cm-eq2} terms of bi-degree $(2\nu,2k-2\nu)$, and obtain 
\begin{equation}\label{g0} 
-2g_0\rq{}\rq{}+\{\mbox{\bf pdt}\}-4(k-\nu)f_1\rq{}=\Psi_{2\nu,2k-2\nu}.
\end{equation}
Recall that, in view of \eqref{normalmaptube}, $f_1\rq{}(0)=g_0\rq{}(0)=1$. Hence the real part of the equation \eqref{g0}, read together with the equation \eqref{f1}, enables us to determine $\re g_0,\re f_1$ uniquely and achieve the conditions $\Psi_{\nu,k-\nu}=\re\Psi_{2\nu,2k-2\nu}=0$.

All the terms, concerned in the normalization conditions \eqref{normalformcirc}, have been considered, and this completes the proof of the proposition.
\end{proof}

\begin{corol}
Suppose that $(N,0)$ and $(\tilde N,0)$ are two different formal normal forms of a
fixed germ $(M,p)$  of a real-analytic hypersurface of finite type $k\geq 3$ at the point $p$. Suppose, in addition, that the polynomial model for $M$ at $p$ is either tubular or generic. Then there
exists a linear transformation $\Lambda$, as in \eqref{dilations},
which maps $(N,0)$ into $(\tilde N,0)$.
\end{corol}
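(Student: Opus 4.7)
The plan is to combine the uniqueness statement in Proposition 2.1 with the invariance of the normalization conditions \eqref{normalformtube} and \eqref{normalformgeneric} under the dilations \eqref{dilations}. Let $F_N = (f_N, g_N)$ and $F_{\tilde N} = (f_{\tilde N}, g_{\tilde N})$ be the formal normalizing transformations from $(M,p)$ to $(N,0)$ and $(\tilde N, 0)$ respectively. Set $\lambda_N := (f_N)_z(0,0)$, $\lambda_{\tilde N} := (f_{\tilde N})_z(0,0)$, and $\mu := \lambda_{\tilde N}/\lambda_N$, and let $\Lambda = \Lambda_\mu$ denote the corresponding dilation from \eqref{dilations}.

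The first step is to verify that $\Lambda_\mu$ sends a hypersurface in normal form to another hypersurface in normal form. A direct substitution shows that the coefficient functions transform as $\Phi^*_{\alpha\beta}(u^*) = \mu^{k-\alpha-\beta}\Phi_{\alpha\beta}(u^*/\mu^k)$ in the real picture, and similarly $\Theta^*_{\alpha\beta}(\bar w^*) = \mu^{k-\alpha-\beta}\Theta_{\alpha\beta}(\bar w^*/\mu^k)$ in the complex one. Since $\mu\in\mathbb{R}\setminus\{0\}$, all the vanishing conditions in \eqref{normalformtube} and \eqref{normalformgeneric}, as well as the real- and imaginary-part conditions, are preserved. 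For the Hermitian condition $(\Theta^{(k-1)}, P_z) = 0$ in the generic case, the scaling factor $\mu^{k-\alpha-\beta} = \mu$ for $\alpha+\beta = k-1$ makes $\Theta^{*(k-1)}$ a real scalar multiple (depending on $\bar w^*$) of the rescaled $\Theta^{(k-1)}$; since $P_z$ itself is rescaled by a real constant under $\Lambda_\mu$, the pairing remains zero.

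Given this invariance, the composite $\Lambda\circ F_N$ is a formal normalizing transformation of $(M,p)$ landing in the normal form $\Lambda(N)$, and its first component has derivative $\mu\cdot\lambda_N = \lambda_{\tilde N}$ at the origin. By the uniqueness assertion of Proposition 2.1(i) in the tubular case and Proposition 2.1(iii) in the generic case, there is a single such normalizing map for each prescribed real derivative $\lambda$. Consequently $\Lambda\circ F_N = F_{\tilde N}$, and therefore $\Lambda(N) = \tilde N$, as required.

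The main obstacle is the invariance verification of the second paragraph. It is computationally elementary but requires separate bookkeeping for each normalization condition, with the Hermitian pairing condition in the generic case warranting the most care. The argument also clarifies why the circular case is excluded from the statement: there the symmetry group of the polynomial model enlarges to \eqref{biggroup}, whose additional real parameters $\theta$ and $r$ cannot be absorbed into a single real dilation of the form \eqref{dilations}.
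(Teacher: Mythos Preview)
Your argument is correct and is precisely the intended one: the paper states Corollary~2.4 without proof, as an immediate consequence of the uniqueness in Proposition~2.1 together with the (straightforward) invariance of the normalization conditions under the dilations~\eqref{dilations}. Your explicit verification of the invariance, including the Hermitian pairing condition in the generic case, fills in the routine details the paper omits; the concluding remark on why the circular case is excluded is also accurate.
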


\section{Special normal form for hypersurfaces of finite type}

We now aim to construct a complete  normal form
 under the action of a {\em special subgroup} of the group $\widehat{\mbox{Aut}}(\CC{2},0)$ of formal biholomorphisms of $(\CC{2},0)$. Namely, we consider the subgroup 
\begin{equation}\label{subgroup}
\Bigl\{F=(f,g)\in \widehat{\mbox{Aut}}(\CC{2},0):\quad f(0,w)=0,\quad \im g(0,u)=0\Bigr\}.
\end{equation}
This subgroup has a natural geometric interpretation as the subgroup of 
all formal biholomorphisms, preserving the distinguished curve \eqref{Gamma}.
Thus, a normalization procedure under the action of the group \eqref{subgroup} for real-analytic hypersurfaces,  already containing the curve \eqref{Gamma}, can be interpreted as a normalization of a triple 
$$(M,\gamma,p), \quad p\in\gamma\subset M,$$ 
where $M$ is a real-analytic hypersurface of finite type $k$ at the distinguished point $p$, and $\gamma\subset M$ is a distinguished curve through $p$, transverse to the complex tangent $T^{\CC{}}_p M$. 

We now describe the special normal form in each of the cases. Note that if a hypersurface $M\ni 0$ contains the curve \eqref{Gamma}, then its defining function satisfies $\Phi_{00}(u)=0,\,\Theta_{00}(\bar w)=0$.

First, if a (formal) hypersurface $M$ is of tubular type, we say that \it $M$ is in a special normal form, \rm if its defining function satisfies
\begin{equation}
\begin{aligned}\label{specialtube}
\Phi_{\alpha 0}=0,\,\alpha\geq 0,\quad \Phi_{\alpha 1}=0,\,\alpha\geq k,\quad \im\Phi_{2k-2,2}=0.
 \end{aligned}
\end{equation}
Next, if a (formal) hypersurface $M$ is of circular type, we say that \it $M$ is in a special normal form, \rm if its defining function satisfies
\begin{equation}\label{specialcirc}
\Theta_{0\alpha}=0,\,\alpha\geq 0,\quad 
\Theta_{\nu\alpha}=0,\,\alpha\ge \nu,\quad
\im\Theta_{\nu,\nu}=\im\Theta_{2\nu,2\nu}=\im\Theta_{3\nu,3\nu}=0 , \quad
 \nu = k/2.
\end{equation}
Finally, if a (formal) hypersurface $M$ is of generic type, we say that \it $M$ is in a special normal form, \rm if its defining function satisfies
\begin{equation}\label{specialgeneric}
\begin{aligned}
\Theta_{0\alpha }=0,\,\alpha\geq 0,\quad \Theta_{\nu,\alpha}=0,\,\alpha\geq k-\nu, \quad
\re\Theta_{2\nu,2k-2\nu}=0,
\end{aligned}
\end{equation}
where $\nu$ is as in Theorem~2.
We now formulate our special normalization statement.

\begin{propos} Let $M$ be a real-analytic hypersurface of finite type $k\geq 3$ at a point $p$, simplified as in \eqref{tangentmodel} and containing the curve \eqref{Gamma}, and  $K$  its polynomial model at $p$.  

\smallskip

(i) Let the polynomial model $K$ be of tubular type. Then for any choice of a non-zero real parameter $\lambda$ there exists a unique formal invertible transformation $F$, as in \eqref{subgroup}, bringing $(M,0)$ into a special normal form \eqref{specialtube} and satisfying $f_z(0,0)=\lambda$.  

\smallskip

(ii) Let the polynomial model $K$ be of circular type. Then for any choice of a non-zero real parameter $\lambda$ and real parameters $\theta,\rho$ there exists a unique formal invertible transformation $F=(f,g)$,  as in \eqref{subgroup}, bringing $(M,0)$ into a special normal form \eqref{specialcirc} and satisfying $f_z(0,0)=\lambda e^{i\theta},\,\re g_{ww}(0,0)=\rho$.  

\smallskip

(iii) Let the polynomial model $K$ be of generic type. Then for any choice of a non-zero real parameter $\lambda$ there exists a unique formal invertible transformation $F=(f,g)$, as in \eqref{subgroup}, bringing $(M,0)$ into a special normal form \eqref{specialgeneric} and satisfying $f_z(0,0)=\lambda$.  

\end{propos}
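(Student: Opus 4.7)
The plan is to mirror the strategy of Propositions~2.1 and~2.3, reducing the statement to a direct sum decomposition of function spaces, but with both the domain of normalizing maps and the relevant codomain restricted to reflect the constraints defining the subgroup \eqref{subgroup}. Concretely, I introduce $\mathcal G_\Gamma \subset \mathcal G$ consisting of pairs $(f,g)-\mathrm{Id}$ satisfying additionally $f(0,w)\equiv 0$ (equivalently $f_0(w)\equiv 0$ in the expansion $f=\sum_{k\geq 0}f_k(w)z^k$) and $\im g(0,u)\equiv 0$ (equivalently $g_0(w)$ has real Taylor coefficients), together with subspaces $\mathcal F_\Gamma\subset\mathcal F$ and $\mathcal F^c_\Gamma\subset\mathcal F^c$ characterized by $\Phi_{00}\equiv 0$, respectively $\Theta_{00}\equiv 0$, which is exactly the condition $\Gamma\subset M$. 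Finally, let $\mathcal N_\Gamma$ denote the subspace cut out by the special normalization conditions \eqref{specialtube}, \eqref{specialcirc}, or \eqref{specialgeneric}.

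The proposition then reduces to the direct sum decompositions
\[
\mathcal F_\Gamma = L(\mathcal G_\Gamma) \oplus \mathcal N_\Gamma \quad \text{(tubular)}, \qquad \mathcal F^c_\Gamma = L^c(\mathcal G_\Gamma) \oplus \mathcal N_\Gamma \quad \text{(circular or generic)},
\]
which, combined with the weight-by-weight induction already used to deduce Proposition~2.1 from Proposition~2.3, produces the unique normalizing $F\in\mathcal G_\Gamma$ together with the residual freedom parametrized by $\lambda$ (resp.\ $\lambda,\theta,\rho$). To establish these decompositions I replay the weight-$m$ analysis in the proof of Proposition~2.3 with the additional constraints $f_0\equiv 0$ and $\im g_0\equiv 0$ in force. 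Each unknown thereby removed must be matched either with an equation whose right-hand side vanishes automatically on $\mathcal F_\Gamma$ (thanks to $\Phi_{00}\equiv 0$ or $\im\Theta_{00}\equiv 0$) or with a correspondingly dropped normalization condition in the special form. For instance, in the tubular case the conditions $\Phi_{k-1,1}=0$, $\re\Phi_{k-2,1}=0$, $\re\Phi_{k,k-1}=0$ present in \eqref{normalformtube} but absent from \eqref{specialtube} correspond to the degrees of freedom carried by $\re f_0$, $\im f_0$ and $\im g_0$ that are no longer available; the bookkeeping in the circular and generic cases is analogous.

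The main obstacle arises in the tubular and generic cases. In Proposition~2.3 the simultaneous normalization of the three conditions $\im\Phi_{k-1,0}=0$, $\re\Phi_{2k-2,1}=0$, $\re\Phi_{k,k-1}=0$ at the critical weight depended on a $3\times 3$ nondegenerate real linear system coupling the unknowns $\im f_0'$, $\re g_{k-1}'$ and $\re f_k$ through the equations \eqref{k-1,0}, \eqref{2k-2,1} and \eqref{k,k-1}. With $f_0$ frozen to zero this system collapses to a $2\times 2$ subsystem in $\re g_{k-1}'$ and $\re f_k$, so exactly one of the three former conditions has to be relinquished; by inspection of \eqref{specialtube} this is precisely $\re\Phi_{k,k-1}=0$. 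I will verify directly that the retained $2\times 2$ minor—namely the one obtained from \eqref{k-1,0} and \eqref{2k-2,1} after setting $\bar f_0=\bar f_0'=0$—is nondegenerate for every $k\geq 3$, which is an elementary determinant computation independent of the coefficient $c(k)$ that entered \eqref{k,k-1}. The generic case is handled by a completely analogous reduction of the corresponding $3\times 3$ system described in the proof of Proposition~2.3, while the circular case is more routine: $f_0$ played no role there at all, and $\im g_0$ was used only to normalize $\im\Theta_{00}$, which vanishes automatically under $\Gamma\subset M$, so the analysis transfers verbatim after that condition is removed from the normal form.
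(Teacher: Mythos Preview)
Your approach is exactly the paper's: define the restricted spaces $\mathcal G_1$ (what you call $\mathcal G_\Gamma$) by $f_0\equiv 0$, $\im g_0\equiv 0$, restrict $\mathcal F$, $\mathcal F^c$ by $\Phi_{00}=0$, $\Theta_{00}=0$, and rerun the Proposition~2.3 computation weight-by-weight. The paper itself records only these two modifications and leaves the verification to the reader, so your sketch is essentially the omitted detail.

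One correction to your bookkeeping: in the \emph{generic} case there is no $3\times 3$ system in the proof of Proposition~2.3. There $f_0$ was determined in a single step from the scalar equation $-2if_0(Q,Q)-2i\bar f_0(\bar Q,Q)=\bigl(\Psi^{(k-1)},Q\bigr)$ via the Cauchy--Schwarz argument, and this is precisely matched by dropping the condition $\bigl(\Theta^{(k-1)},P_z\bigr)=0$ from \eqref{normalformgeneric} to obtain \eqref{specialgeneric}; the rest of the generic analysis (determining $g_{k-1}$, then $f_\alpha$ for $\alpha>1$, then $\im f_1$, then $\re g_0,\re f_1$) goes through unchanged since $f_0$ enters there only through $\{\mbox{\bf pdt}\}$. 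So the generic reduction is actually simpler than you suggest, not analogous to the tubular $3\times 3\to 2\times 2$ collapse. In the tubular case your description is right: with $f_0=0$ the equations \eqref{k-1,0} and \eqref{2k-2,1} become triangular in $g_{k-1}$ and $f_k$, while \eqref{k,k-1} is discarded along with the condition $\re\Phi_{k,k-1}=0$.
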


\begin{proof}
The proof is completely analogues to the one in Section 2.2, and we leave the details to the reader. The minor differences are as follows. 

\smallskip

\noindent(i) One has to replace the space $\mathcal G$ by its subspace $\mathcal G_1$ determined by  
$$\im g_0(u)=0,\,f_0(u)=0,$$
and then prove the direct decompositions 
$$ \mathcal F=L(\mathcal
G_1)\oplus\mathcal N, \quad \mathcal F^c=L^c(\mathcal
G_1)\oplus\mathcal N^c.$$
Because of the additional constraints on the map, less terms can be finally eliminated in the defining functions $\Phi,\Theta$ of $M$, that is why the special normal form appears to be simpler than the one in Section 2 or the one in \cite{kolar}. 

\smallskip

\noindent
(ii) The spaces $\mathcal F$ and $\mathcal F^c$ should be modified by adding the conditions $\Phi_{00}=0$  and $\Theta_{00}=0$, respectively.
\end{proof}

\begin{corol}
Suppose that $(N,0)$ and $(\tilde N,0)$ are two different special normal forms of a
fixed triple $(M,\gamma,p)$, when $M$ is a real-analytic hypersurface of finite type $k\geq 3$ at the point $p$ and $\gamma\subset M$, $\gamma\ni p$ is a curve, transverse to the complex tangent $T^{\CC{}}_p M$. Suppose, in addition, that the polynomial model for $M$ at $p$ is either tubular or generic. Then there
exists a linear transformation $\Lambda$, as in \eqref{dilations},
which maps $(N,0)$ into $(\tilde N,0)$.
\end{corol}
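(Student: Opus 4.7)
The plan is to adapt the uniqueness-based argument underlying Corollary 2.4 to the smaller isotropy imposed by the subgroup \eqref{subgroup}. First I would invoke Proposition 3.1 (parts (i) or (iii) according as $K$ is tubular or generic) to obtain two formal transformations $F_1,F_2$ in the subgroup \eqref{subgroup} bringing $(M,\gamma,p)$ onto $(N,\Gamma,0)$ and $(\tilde N,\Gamma,0)$ respectively. Their composition $F:=F_2\circ F_1^{-1}$ is then a formal biholomorphism of $(\CC{2},0)$ taking $(N,0)$ onto $(\tilde N,0)$, and the goal is to identify $F$ with a dilation from \eqref{dilations}.

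Next I would verify that $F$ itself belongs to the subgroup \eqref{subgroup}. Since $F_1$ and $F_2$ each send $\gamma$ onto $\Gamma$, the composition $F$ preserves $\Gamma$ setwise. Because $\{z=0\}$ is the unique holomorphic complexification of $\Gamma$, it must also be preserved by $F$, forcing $f(0,w)\equiv 0$; combined with $F(\Gamma)\subset\Gamma$, this yields $\im g(0,u)\equiv 0$, so $F$ indeed lies in \eqref{subgroup}.

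The heart of the argument is then to check that the scalings \eqref{dilations} preserve the defining conditions of the special normal form. Under $\Lambda(z,w)=(\lambda z,\lambda^k w)$ with $\lambda\in\RR{}\setminus\{0\}$, a direct weight count shows that each coefficient $\Phi_{\alpha\beta}(u)$ (resp.\ $\Theta_{\alpha\beta}(\bar w)$) is transformed into $\lambda^{k-\alpha-\beta}\Phi_{\alpha\beta}(u/\lambda^k)$ (resp.\ the analogous expression in $\bar w$). Since $\lambda$ is real, each vanishing and each $\re$/$\im$ condition in \eqref{specialtube} or \eqref{specialgeneric} is preserved, and the homogeneity of $P$ preserves both the polynomial model and the normalization $a_\nu=1$. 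Hence $\Lambda$ carries $(N,\Gamma,0)$ to another special normal form.

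To conclude, set $\lambda:=f_z(0,0)\in\RR{}\setminus\{0\}$, which is real in the tubular and generic cases because the extra phase $\theta$ of Proposition 3.1(ii) is absent. Then both $F$ and $\Lambda$ lie in \eqref{subgroup}, carry $(N,\Gamma,0)$ to a special normal form, and share the same derivative $\lambda$ at $0$ in the $z$-direction. Applying the uniqueness clause of Proposition 3.1 to the triple $(N,\Gamma,0)$ itself forces $F=\Lambda$, whence $\tilde N=F(N)=\Lambda(N)$. The only step requiring careful verification is the invariance of each condition in \eqref{specialtube} and \eqref{specialgeneric} under the scalings, which is also precisely what prevents the circular case from being reduced to dilations \eqref{dilations}, since there the full isotropy \eqref{biggroup} is strictly larger.
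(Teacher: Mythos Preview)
Your argument is correct and is precisely the standard deduction the paper leaves implicit: the corollary is stated without proof as an immediate consequence of the uniqueness clause in Proposition~3.1, via the factorization $F=\tilde F\circ\Lambda$ through the isotropy \eqref{dilations} of the polynomial model. The one place you might sharpen the exposition is the reality of $\lambda=f_z(0,0)$: rather than invoking the absence of the phase parameter in Proposition~3.1, it is cleaner to note directly that the weight-preserving linear part of $F$ must lie in the isotropy of the normalized model $K$, which in the tubular and generic cases is exactly \eqref{dilations}.
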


\section{Transverse curves of constant type and degenerate chains}

\subsection{Structure of the maximal type locus} Let $M\subset\CC{2}$ be a real-analytic hypersurface of finite type $k\geq 3$ at a point $p\in M$. Recall that the subset of points in a neighborhood of $p$ in $M$, at which the type equals $k$, is called the maximal type locus at $p$. \rm  Before proceeding with the proof of Theorems 1 and 2, we need the following two structure results for the maximal type locus of a finite type hypersurface.

\begin{propos} Let $M\subset\CC{2}$ be a real-analytic hypersurface of
finite type $k\geq 3$ at a point $p\in M$. Then the following alternative holds.

\medskip

\it (I)   Either the maximal type locus at $p$ is a single point,

\medskip

or

\medskip

\it (II)   the maximal type locus at $p$ is  a real-analytic subset of $M$ of dimension $1$,

\medskip

or

\medskip

\it (III)  the maximal type locus is a smooth  real-analytic
hypersurface in $M$, which coincides with the whole Levi degeneracy set $\Sigma$. 
\end{propos}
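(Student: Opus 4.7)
The plan is to establish the trichotomy in three steps: first by verifying that $C$ is a real-analytic subset of $M$ of dimension at most $2$; next by using the $\dim_p C = 2$ hypothesis to force the polynomial model to be tubular; and finally by upgrading this rigidity to the smoothness of $\Sigma$ and the equality $C = \Sigma$.

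\medskip

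First, after straightening coordinates at any base point $q$ so that $T_q M = \{v = 0\}$, the condition ``type at $q$ is $\geq k$'' is equivalent to the simultaneous vanishing at the origin of the partials $\partial_z^i \partial_{\bar z}^j \Phi$ with $i, j \geq 1$ and $i + j < k$. Varying $q$, this is a finite family of real-analytic conditions on $M$, so $C$ is a real-analytic subset of $M$ that, by upper semi-continuity of the type, coincides with the maximal type locus near $p$. Since $k \geq 3$, every point of $C$ is Levi-degenerate, hence $C \subset \Sigma$; and since $M$ has finite type, $M$ is not Levi-flat, so the scalar Levi form does not vanish identically on the $3$-dimensional $M$ and $\dim \Sigma \leq 2$. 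This gives $\dim_p C \in \{0, 1, 2\}$, yielding the three alternatives (I), (II), (III).

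\medskip

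Assume now $\dim_p C = 2$. In polynomial model coordinates $v = P(z, \bar z) + O(k+1)$, the leading contribution of each condition $\partial_z^i \partial_{\bar z}^j \Phi|_C = 0$ (for $i, j \geq 1$, $i + j < k$) is the requirement that the polynomial $P_{z^i \bar z^j}(z, \bar z)$ vanish on the projection $\gamma_0 = \pi_z(C)$ to the $z$-plane. The possibility $\dim \gamma_0 = 2$ would force $P_{z\bar z} \equiv 0$ and hence $P \equiv 0$ (as $P$ has no harmonic terms), contradicting the polynomial model assumption; so $\gamma_0$ is a $1$-dimensional real-analytic curve. The family of all partial derivatives of $P_{z \bar z}$ up to order $k - 3$ vanishes on $\gamma_0$, so the polynomial $P_{z \bar z}$, itself of degree $k - 2$, must vanish to order $k - 2$ along $\gamma_0$. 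This forces $P_{z\bar z} = c \cdot \ell(z, \bar z)^{k-2}$ for a nonzero real constant $c$ and a real linear form $\ell = \alpha z + \bar\alpha \bar z$. A real scaling in $z$ normalizes $\ell = z + \bar z$, and absorbing harmonic summands then yields
\begin{equation*}
P(z, \bar z) = \tfrac{1}{k}\bigl[(z + \bar z)^k - z^k - \bar z^k\bigr],
\end{equation*}
so the polynomial model is tubular.

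\medskip

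With $P$ tubular, the full Levi form of $M$ has the form $\Lambda = (k-1)(z + \bar z)^{k-2} + R$, where $R$ is of strictly higher weighted order. Weierstrass preparation applied to $\Lambda$ in the variable $t = z + \bar z$ produces a factorization $\Lambda = \bigl(t^{k-2} + a_{k-3}(s, u) t^{k-3} + \cdots + a_0(s, u)\bigr) \cdot U(z, \bar z, u)$ with $s = z - \bar z$, $U$ a nonvanishing real-analytic unit, and $a_j(0) = 0$. The already-established fact that $\Lambda$ vanishes on the $2$-dimensional real-analytic set $C$ near $p$ rules out a decomposition of the Weierstrass polynomial into several distinct real sheets, so it must equal $(t - h(s, u))^{k-2}$ for a real-analytic $h$ with $h(0) = 0$. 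This identifies $\Sigma$ near $p$ with the smooth real-analytic hypersurface $\{z + \bar z = h(z - \bar z, u)\} \cap M$, transverse to $T_p^{\CC{}} M$ and totally real in $\CC{2}$. Since $\Sigma$ is then smooth and locally irreducible of dimension $2$ while $C \subset \Sigma$ has the same dimension, we conclude $C = \Sigma$ near $p$. The hardest step will be this Weierstrass factorization: the leading term of $\Lambda$ vanishes to order $k - 2 \geq 1$, so no ordinary implicit function theorem applies, and one must combine the pure $(k-2)$-th power rigidity from the previous step with the $2$-dimensionality of $C$ to pin down the unique Weierstrass factor.
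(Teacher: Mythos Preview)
Your approach diverges from the paper's, and Step~2 contains a genuine gap. The claim that each $P_{z^i\bar z^j}$ vanishes on the projection $\gamma_0=\pi_z(C)$ is unjustified: the defining equations of $C$ are $\partial_z^i\partial_{\bar z}^j\Phi=0$, and the higher-weight corrections to $P_{z^i\bar z^j}$ depend on $u$, so passing to the $z$-projection loses exactly the information needed. Concretely, take $k=4$, tubular $P$, and $\Phi(z,\bar z,u)=P(z-cu,\bar z-cu)$ for real $c\neq0$ (this is just $v=P$ after the shear $z\mapsto z-cw$). Then $C=\{z+\bar z=2cu\}$ is two-dimensional, yet $\gamma_0=\pi_z(C)$ is all of the $z$-plane while $P_{z\bar z}=3(z+\bar z)^2\not\equiv0$. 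So ``$\dim\gamma_0=2\Rightarrow P_{z\bar z}\equiv0$'' fails, and with it your route to the tubular model. (You are also proving more than the proposition asks: tubularity of the model is the content of Proposition~4.2, not 4.1.)

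The paper's argument is much shorter and avoids this trap. It observes that $C$ coincides with the analytic set $E$ in \eqref{bigtype}; when $\dim E=2$, the condition $d^{k-2}\Delta|_{T^{\CC{}}_p}\neq0$ says some component $g$ of $d^{k-3}\Delta|_{T^{\CC{}}}$ has nonvanishing differential along $T^{\CC{}}_p$, so $\{g=0\}$ is a smooth $2$-surface containing the $2$-dimensional analytic set $C$, whence $C=\{g=0\}$ is smooth. The equality $C=\Sigma$ follows by a component argument: an extra component $\tilde C$ of $\Sigma$ through $p$ would force the vanishing order of $\Delta$ along $T^{\CC{}}$ to differ at $p$ and at a nearby $q\in C$ off $\tilde C$, contradicting the constant type $k$ along $C$. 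Tubularity is only derived afterwards (Proposition~4.2), by first straightening the now-smooth $\Sigma$ to $\{\re z=0,\,v=0\}$.

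Your Weierstrass idea in Step~3 can in fact be salvaged and made self-contained, bypassing Step~2 entirely. Pick any real direction $t$ in the $z$-plane with $P_{z\bar z}|_{s=u=0}=ct^{k-2}+O(t^{k-1})$, $c\neq0$; Weierstrass gives $\Lambda=W(t;s,u)\cdot U$ with $\deg_t W=k-2$. On $C$ the pure $t$-derivatives $\partial_t^jW$, $j\le k-3$, all vanish, forcing $W(\,\cdot\,;s_0,u_0)=(t-t_0)^{k-2}$ at each point of $C$ with $t_0=-a_{k-3}(s_0,u_0)/(k-2)=:h(s_0,u_0)$; this confines $C\subset\{t=h\}$ together with the coefficient constraints $a_j(s,u)=\binom{k-2}{j}(-h)^{k-2-j}$. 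For $C$ to be $2$-dimensional these constraints must hold identically, so $W=(t-h)^{k-2}$ and $\Sigma=\{t=h\}$ is smooth with $C=\Sigma$. Note, however, that your stated justification (``$\Lambda$ vanishes on the $2$-dimensional $C$, ruling out several sheets'') is too weak: it is the order-$(k-2)$ vanishing of $\Lambda$ along $T^{\CC{}}$ on $C$---i.e.\ the full type-$k$ condition---that forces a single sheet, not the mere vanishing of $\Lambda$.
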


\medskip

\begin{proof}
 Let $\Delta(q)$ denotes the Levi
form of $M$ (which can be considered as a scalar function on $M$ in the case of $\CC{2}$), and $\Sigma$ the Levi degeneracy set of $M$. We have 
$$\Sigma=\{q\in M:\,\Delta(q)=0\}.$$ 
Note that, as follows from \eqref{tangentmodel}, the type of $M$ at a point $q$ equals $l\geq 3$ if and only if 
\begin{equation}\label{typel}
\Delta(q)=d\Delta(q)|_{T^{\CC{}}_q}=...=d^{l-3}\Delta(q)|_{T^{\CC{}}_q}=0, \quad \mbox{while} \quad d^{l-2}\Delta(q)|_{T^{\CC{}}_q}\neq 0.
\end{equation} 
 Thus  the maximal type locus $C\subset\Sigma$ is contained in the
 real-analytic set
\begin{equation}\label{bigtype}
E=\bigl\{q\in\Sigma:\,\Delta(q)=d\Delta(q)|_{T^{\CC{}}_q}=...=d^{k-3}\Delta(q)|_{T^{\CC{}}_q}=0\bigr\},
\end{equation}
describing the set of points in $M$ of type $\geq k$. However, since the type $k$ is locally maximal for points in $M$ near $p$, we have $C=E$ (locally near $p$). If $E$ has dimension $0$ or $1$, then the assertion in (I) or (II), respectively, holds. 
Otherwise $E=C$ has dimension $2$, and the same holds for $\Sigma$. Since, in addition, $d^{k-2}\Delta(p)|_{T^{\CC{}}_q}\neq 0$, we conclude (after intersecting $M$ with a sufficiently small neighborhood $U$ of $p$, if necessary) that $C$ is smooth at $p$. To prove that $C=\Sigma$, we notice that if $\tilde C\neq C$ is another component of the real-analytic set $\Sigma$, passing through $p$ (which has dimension $1$ or $2$),  then we can find a point $q\in C$ near $p$, which is not contained in any other component of $\Sigma$ rather than $C$. Clearly, the characterizing property \eqref{typel} can not hold 
at $p$ and $q$ simultaneously with the same $l$, in view of the existence of an additional component in $\tilde C\subset\Sigma$, while both $p$ and $q$ lie in the maximal type locus $C$, which is a contradiction. This proves the assertion in (III). 
\end{proof} 

From here we can deduce

 \begin{propos} Let $M\subset\CC{2}$ be a real-analytic hypersurface of
finite type $k\geq 3$ at a point $p\in M$, and assume that $M$ contains a transverse curve of constant type through $p$. Then the following alternative holds.

\medskip

\noindent (I)  Either the Levi degeneracy set
$\Sigma$ contains only finitely many transverse curves of constant type, passing through $p$,

\medskip

or

\medskip

\it (II)  the Levi degeneracy set
$\Sigma\subset M$ contains infinitely many transverse curves of constant type, passing through $p$, and in this case the maximal type locus at $p$ coincides with the whole Levi degeneracy set $\Sigma$, while the latter is a smooth real-analytic hypersurface in $M$, transverse to the complex tangent $T^{\CC{}}_p M$ and totally real in $\CC{2}$. Moreover, the polynomial model for $M$ in the latter case is tubular at any point $q\in \Sigma$.
\end{propos}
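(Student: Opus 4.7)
The plan is to reduce Proposition~4.2 to the trichotomy established in Proposition~4.1 applied to the maximal type locus $C$ at $p$, and then to handle the three cases separately. I would begin with the observation that any transverse curve of constant type $\gamma\ni p$ is automatically contained in $C$, since the type is identically $k$ along $\gamma$ and $C$ is by definition the set of points of type $k$ near $p$. The hypothesis of Proposition~4.2 therefore rules out case~(I) of Proposition~4.1 (where $C=\{p\}$).

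In case~(II) of Proposition~4.1, $C$ is a real-analytic subset of dimension~$1$ at $p$ and hence has only finitely many irreducible analytic branches through $p$. Since every transverse curve of constant type through $p$ is a smooth $1$-dimensional real-analytic subset of $C$, it must agree with a union of such branches; thus only finitely many can exist, which gives alternative~(I) of Proposition~4.2.

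For case~(III) of Proposition~4.1, where $C=\Sigma$ is a smooth real-analytic surface of dimension~$2$ coinciding with the Levi-degeneracy set, I would argue as follows. The existence of a transverse curve $\gamma\subset\Sigma$ forces $T_p\Sigma$ to contain a direction outside $T^{\CC{}}_pM$; since $T_p\Sigma$ and $T^{\CC{}}_pM$ are both $2$-dimensional real subspaces of the $3$-dimensional $T_pM$, they must be transverse as soon as they do not coincide, and this transversality persists on all of $\Sigma$ by continuity after shrinking $U$. Because the unique complex line (equivalently, $J$-invariant $2$-plane) inside $T_qM$ is $T^{\CC{}}_qM$, the transversality of $T_q\Sigma$ to $T^{\CC{}}_qM$ also rules out $T_q\Sigma$ being a complex line, i.e., $\Sigma$ is totally real in $\CC{2}$. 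Any smooth real-analytic curve in $\Sigma$ through $p$ with tangent outside the $1$-dimensional intersection $T_p\Sigma\cap T^{\CC{}}_pM$ is a transverse curve of constant type, and the set of admissible tangent directions being open and dense in $T_p\Sigma$, this yields infinitely many such curves.

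The remaining assertion, which I regard as the main obstacle, is that the polynomial model of $M$ at every $q\in\Sigma$ is tubular. By the same trichotomy applied at an arbitrary point of $\Sigma$, it suffices to treat $q=p$. I would work in coordinates centered at $p$ bringing $M$ into the standard form $v=P(z,\bar z)+O(k+1)$ with $P$ homogeneous of degree~$k$ without harmonic terms, and exploit the $2$-parameter family of points of $\Sigma$ near~$p$: shifting the base point to a nearby $q\in\Sigma$ and renormalizing produces a deformed polynomial model $P_q$ that must again be homogeneous of degree~$k$ without harmonic terms at $q$. Combining this translation-invariance with the vanishing of $\Phi$ along $\Sigma$ at all orders (because $\Sigma\subset M$) leads to a system of polynomial identities on the coefficients of $P$ that can be resolved to show $P$ coincides, up to a linear change of the $z$-coordinate and a holomorphic shift of $w$ absorbing harmonic terms, with $\tfrac{1}{k}[(z+\bar z)^k-z^k-\bar z^k]$. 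The delicate bookkeeping needed to reconcile the straightening of $\Sigma$ with the standard normalization of $P$ without harmonic terms is where I expect most of the effort to be spent.
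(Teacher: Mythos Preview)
Your reduction to Proposition~4.1 and your treatment of the $0$- and $1$-dimensional cases are correct and essentially what the paper does (the paper passes over them in one sentence). Your argument for transversality of $\Sigma$ to $T^{\CC{}}_pM$ and total reality of $\Sigma$ is also correct and in fact spelled out more carefully than in the paper, which simply asserts it.

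Where you diverge is in the tubular claim, and here the paper's argument is far simpler than the one you are anticipating. Instead of tracking how the polynomial model $P_q$ deforms as $q$ varies in $\Sigma$, the paper straightens $\Sigma$ once and for all: since $\Sigma$ is a totally real surface transverse to $T^{\CC{}}_pM$, choose holomorphic coordinates with $p=0$, $T_0M=\{v=0\}$, and $\Sigma=\{\re z=0,\,v=0\}$. In these coordinates the Levi form of $M$ has the form $\Delta=P_{z\bar z}+O(|u|)+O(|z|^{k-1})$, and the condition that every point of $\Sigma$ has type $k$ (namely, that $\Delta$ and its complex-tangential derivatives up to order $k-3$ vanish on $\Sigma$) forces $P_{z^i\bar z^j}|_{\re z=0}=0$ for all $i,j\ge1$ with $i+j\le k-1$. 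For a homogeneous polynomial $P$ of degree $k$ this immediately yields $P(z,\bar z)=\lambda(\re z)^k$ modulo harmonic terms, i.e.\ the tubular model. So the ``delicate bookkeeping'' you expect is unnecessary: the whole computation is two lines once $\Sigma$ is flattened. Note also that your phrase ``vanishing of $\Phi$ along $\Sigma$ at all orders'' is not quite the right input---$\Sigma\subset M$ gives only $\Phi|_\Sigma=0$; what you actually need is the vanishing of the iterated Levi-form derivatives along $\Sigma$, which encodes the constant-type condition.
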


 \begin{proof}
 After applying Proposition 4.1, it remains to prove that in case (II) the polynomial model for $M$
is tubular at any point $q\in \Sigma$. Without loss of generality we may assume $q=p$. We choose local holomorphic coordinates near $p$ in such a way that $p$ is the origin, $T_0 M=\{v=0\}$, and the totally real manifold $\Sigma$ is given by $\{\re z=0,\,v=0\}$. If we denote the
polynomial model for $M$ at $p$ by $P(z,\bar z)$, then the Levi form of $M$ near $p$ looks as
\begin{equation}\label{Determinant}
\Delta(z,\bar z,u)=P_{z\bar z}+O(|u|)+O(|z|^{k-1})
 \end{equation}
 Since the set $\Sigma$ coincides with the maximal type locus $C$, given by \eqref{bigtype}, the right hand side in \eqref{Determinant} vanishes identically for $\re z=0$, so that $P_{z\bar z}$ vanishes identically for $\re z=0$. Similarly, all the derivatives $P_{z^i\bar z^j},\,i,j\geq 1,\,i+j\leq k-1$ vanish identically for $\re z=0$, while at least one of the derivatives $P_{z^i\bar z^j},\,i,j\geq 1,\,i+j=k$ is non-zero at the origin. This proves that, up to harmonic terms, $P(z,\bar z)=\lambda(\re z)^k,\,\lambda\neq 0$, as required.
 
 \end{proof}
 
 In accordance with Proposition 4.2, for  a real-analytic hypersurface $M\subset\CC{2}$ of finite type $k\geq 3$ at the point $0\in M$, simplified as in \eqref{tangentmodel}, we consider four cases, in each of which the convergent normalization procedure is significantly different.
 
 \smallskip
 
 \noindent \bf Case G. \rm In this case the tangent model $K$ at $0$ is generic, and there exist
 finitely many transverse curves of constant type through $0$ (all of which are contained in a real-analytic set of dimension $1$).
 
 \smallskip
 
 \noindent \bf Case C. \rm In this case the tangent model $K$ at $0$ is circular, and there exist
 finitely many transverse curves of constant type through $0$ (all of which are contained in a real-analytic set of dimension $1$).
 
 \smallskip
 
 \noindent \bf Case T1. \rm In this case the tangent model $K$ at $0$ is tubular, and there exist
 finitely many transverse curves of constant type through $0$ (all of which are contained in a real-analytic set of dimension $1$).
 
  \smallskip
 
 \noindent \bf Case T2. \rm In this case the tangent model $K$ at $0$ is tubular, and there exist
 infinitely many transverse curves of constant type through $0$; the Levi degeneracy set $\Sigma\subset M$ coincides with the maximal type locus $C$ at $p$ and is a totally real surface in $\CC{2}$, transverse to the complex tangent $T^{\CC{}}_0 M$. Note that, as follows from Proposition 4.2, $M$ is of class T2 at any other point $p\in C$ in this case.
 
 \smallskip
 
 \subsection{Degenerate chains} We next introduce in each of the cases distinguished transverse curves of constant type, passing 
 through $0$ and lying in the Levi degeneracy set $\Sigma$ of $M$. We call them \it degenerate chains. \rm 
 
 If $M$ is of one of the classes G, C or T1, we simply call each of the transverse curves of constant type, passing though $0$, a \it degenerate chain. \rm We have at most finitely many degenerate chains, passing through $0$ in these cases.  
 
 If, otherwise, $M$ is of class T2, the construction of degenerate chains is more involved.  We first introduce a canonical pair of foliations in the Levi degeneracy set $\Sigma$. Both ones are defined by the use of appropriate slope (line) fields in the smooth real-analytic manifold $\Sigma$, which we integrate then. 
 
 To define the first slope field,  we choose a point $p\in \Sigma$ and coordinates $(z,w)$
vanishing at $p$, where $M$ takes the form \eqref{tangentmodel}. 
Clearly, these coordinates can be chosen polynomial with
coefficients depending analytically on $p$.
% and make a polynomial coordinate
%change, bringing $p$ to the origin and $M$ to the form \eqref{simplified} (that we denote by $M_S$).
Let $N$ denotes a (formal) normal form  \eqref{normalformtube} of $M$
at $p$, $F$ a formal transformation, mapping $(M,p)$ onto $(N,0)$,
and $e:=(0,1)\in\CC{2}$. We then define a slope at $p$ as follows:
$$l(p):=\mbox{span}_{\RR{}}\left\{(dF|_p)^{-1}(e)\right\}\subset T_{p}\Sigma.$$
We have to prove the inclusion $l(p)\subset T_{p}\Sigma$ and the fact that $l(p)$ is analytic in $p$. First, it follows from Corollary 2.4 that the definition of $l$ is
independent of the choice of normal form. Moreover, the desired
slope can be also defined without using formal transformations.
Indeed, it follows from the normal form construction that, as soon
as the initial weighted polynomials
$\{\Phi^*_j,f_{j-k+1},g_j,\,k+1\leq j\leq m\}$ for some $m\geq k+1$ have
been determined, they do not change after further normalization of
terms of higher weight. Hence, solving the equations \eqref{CM}
for $m$ sufficiently large (namely, for all $m\leq 2k-1$), we uniquely determine
%the collection
%$\{f_2,f_3,g_4,g_5\}$ and thus determine
$dF|_p$. Note that, if a hypersurface of class T2 satisfies \eqref{tangentmodel}, then the tangent space to its Levi degeneracy set at the origin is given by $\bigl\{\re z=0,\,v=0\bigr\}$. Hence the latter tangent space for a hypersurface, normalized up to weight $2k-1$, contains the vector $e$, and it follows from the invariance of $\Sigma$ that $l(p)\subset T_{p}\Sigma$.
It is also not difficult to see from here that the constructed slope
field is analytic (i.e., depends analytically on a point
$p\in \Sigma$). Indeed, the explicit construction in the beginning
of Section 2 shows that each fixed weighted polynomial $\Phi_m$,
 depends on $p$ analytically (this can be
verified from the parameter version of the implicit function
theorem). Hence polynomials $f_m$ and $g_m$ depends on $p$
analytically, as it is obtained by solving a system of linear
equations with a fixed nondegenerate matrix in the left-hand side and
right-hand side analytic in $p$ (the latter fact can be seen from
the proof of Proposition 2.3). We immediately conclude that
$dF|_p$ depends on $p$ analytically, and so does $l(p)$.
We then integrate the analytic slope field $l(p)$ and obtain a
canonical (non-singular) real-analytic foliation $\mathcal T$ of
the Levi degeneracy set $\Sigma$. We call each of the leaves of the foliation $\mathcal T$ \it a
degenerate chain. \rm 
 Each degenerate chain
$\gamma\subset \Sigma$ at each point $p\in\gamma$ is transverse to
the complex tangent $T^{\CC{}}_pM$.

The second canonical foliation in $\Sigma$ corresponds to the
slope field 
$$c(p):=T^{\CC{}}_p M\cap T_p\Sigma.$$ 
Integrating
$c(p)$ we obtain another canonical foliation $\mathcal S$ in
$\Sigma$, which is everywhere tangent to $T^{\C}M$.
%The leaves of the foliation can be described explicitly as intersections $\{Q_p\cap\Sigma,\,p\in\Sigma\}$.
Both foliations $\mathcal T$ and $\mathcal S$ are transverse to
each other and are biholomorphic invariants of $(M,0)$. We call
them respectively {\em transverse} and {\em tangent} foliations.

Degenerate chains play a crucial role in the convergent normalization procedure, described in the next section.

 \section{Convergent normalization to the strong normal form}
 
 \subsection{Construction of the normalizing transformation} 
We proceed now with the proof of the existence of a normalizing transformation in Theorems 1 and 2. As before, $M\subset\CC{2}$ denotes a real-analytic hypersurface of finite type $k\geq 3$ at the point $0\in M$. We assume $(M,0)$ to be simplified, as in \eqref{tangentmodel} or \eqref{tangentmodelc}. In each of the cases the normalizing transformation, satisfying \eqref{normalmaptube} (cases G,T1 and T2) or \eqref{normalmapcirc} (case C) and bringing $(M,0)$ into one of the normal forms \eqref{strongcirc},\eqref{strongtube2},\eqref{stronggeneric}, is a finite composition of biholomorphic transformation, each of which is a normalization of certain geometric data, associated with a finite type hypersurface. We describe below the transformations and their geometric meaning. For the set-up of the theory of Segre varieties see, e.g., \cite{ber}. The first two steps are the same in all of cases, while the final steps are significantly different in each case. 

\bigskip

\noindent{\it Normalization of a chain.} We choose any degenerate chain $\gamma\subset\Sigma$, passing through $0$,
and perform a biholomorphic transformation, transferring $\gamma$ into the curve
$$\Gamma=\{z=0,\,\, v=0\}.$$
Since such a transformation has the form $z^*=z+O(|w|),\,w^*=w+O(|w|^2)$, it preserves the forms \eqref{tangentmodel},\eqref{tangentmodelc}.  Now the new hypersurface $M$ satisfies \eqref{tangentmodel},\eqref{tangentmodelc}, contains $\Gamma$ and has the constant type $k$ at each point $q\in\Gamma$. In addition, $\Phi_{00}(u)=0,\,\Theta_{00}(\bar w)=0$.

\medskip

\noindent \it Normalization of Segre varieties along a chain. \rm
The next step in the normalization procedure is the elimination of
$(0,\alpha)$ terms in the expansion of $\Phi$, which is sometimes
addressed as transfer to \it normal coordinates \rm (see
\cite{ber}).   Geometrically, this step can be interpreted as
straightening of the Segre varieties $Q_p,\,p\in\Gamma$. In the latter case, for the complex defining function we immediately get $\Theta^*(z,0,u)=u$, so that the $(0,\alpha)$ terms are removed from $\Theta$ as well. In addition, it is easy to see from the connection between $\Phi$ and $\Theta$ that we have also $\Theta^*_{\alpha 0}=0,\,\alpha\geq 0$

  According
to \cite{chern}, \cite{ber}, we perform the unique transformation
of the form
\begin{equation}
\label{kill-k0} z^*=z,\quad w^*=w+g(z,w), \quad g(0,w)=0,
\end{equation}
which maps $M$ into a hypersurface with $\Phi^*_{0\alpha}=0,\,\alpha\geq 0$ (and hence $\Theta^*_{0\alpha}=\Theta^*_{\alpha 0}=0,\,\alpha\geq 0$). 
This transformation preserves the curve $\Gamma$ and the forms \eqref{tangentmodel},\eqref{tangentmodelc}. 
Segre varieties of points $p=(0,\eta)\in\Gamma$ look all now as $\{w=\eta\}$, and
for any $p\in\Gamma$ we now have $T^{\CC{}}_p M=\{w=0\}$. 

Presence of the transverse curve $\Gamma\subset\Sigma$ of constant type and the fact that all the complex tangents along $\Gamma$ have the form $\{w=0\}$ imply strong consequences for the defining function $\Phi$ of $M$. Namely, the approximations  \eqref{tangentmodel},\eqref{tangentmodelc} become invariant under shifts $w\mapsto w+u_0$, so that 
for the new hypersurface
\begin{equation}\label{strong}
\Phi^*_{\alpha 0}=\Phi^*_{0\alpha}=\Theta^*_{0\alpha}= \Theta^*_{\alpha 0}=0,\,\alpha\geq 0,\quad \Phi^*_{\alpha\beta}=\Theta^*_{\alpha\beta}=0,\,\alpha,\beta\geq 0,\, \alpha+\beta\leq k-1.
\end{equation}

\medskip

From now on we consider several cases.

\bigskip

\begin{center}\bf Case T1 and case G with $\nu=1$.\end{center}

\bigskip

In this case we deal with the real defining function $\Phi$ only.

\medskip

\noindent \it Normalization of the Segre map. \rm  Our goal is to bring a hypersurface, obtained in the previous step, to such a form that it satisfies, in addition, the condition $$\Phi^*_{\alpha 1}=\Phi^*_{1 \alpha }=0,\,\alpha\geq k-1.$$

 This step can be
interpreted as a normalization of the Segre map of $M$ near $0$. 
Recall that the 
{\em Segre map} of $M$ assigns to each point $p$ in a neighborhood of the origin in $\CC{2}$ a (fixed) sufficiently
  high jet of its Segre variety $Q_p$ at the intersection point of $Q_p$  with $z=0$. In this step we may take the $1$-jet  and normalize the Segre map as
$$
(\xi,\eta)\mapsto\left(\bar\eta,2i\bar\xi^{k-1}\right) .$$

We first achieve the condition $\Phi_{k-1,1}(u)=0$. 
Choose a real-valued analytic function  $\lambda(u)$,
for $u$ in a neighborhood of $0$,
 such that $\lambda(0)=1$ and 
 $$1+\Phi_{k-1,1}(u)=(\lambda(u))^k$$
  (recall that $\Phi_{k-1,1}(0)=0$). We perform the
biholomorphic change
$$z^*=z\lambda(w),\,w^*=w.$$
 Since $w^*=w$, we get
\begin{equation}\label{nochange}
\Phi(z,\bar z,u)=\Phi^*(z^*,\bar z^*,u) \,\, \mbox{for}\,\, (z,w)\in M.
\end{equation} 
By comparing in \eqref{nochange} terms of uniform degree $<k$ in $z,\bar z$, we
conclude that $\Phi^*$ also satisfies \eqref{strong}.
Further, by comparison of  $(k-1,1)$ terms in $z,\bar z$ in \eqref{nochange}, we obtain 
$$1+\Phi_{k-1,1}(u)=\left(1+\Phi^*_{k-1,1}(u)\right)(\lambda(u))^k,$$
so that $M$ is mapped into a hypersurface satisfying $\Phi^*_{k-1,1}=0$, as required.

 Next, for a hypersurface, satisfying $\Phi_{k-1,1}=0$, we perform a transformation
$$z=z^*+f(z^*,w^*),\quad w=w^*, \quad f(z^*,w^*)=O(|z^*|^{2})$$
in order to remove the $(\alpha,\nu)$-terms with $\alpha\geq k$. 
Clearly, the target hypersurface $M^*$ satisfies the previously achieved normalization conditions. Let us put $f=:z^*h,\,h=O(|z^*|)$.
 Using \eqref{strong} and \eqref{nochange}, we compute, by comparing in \eqref{nochange} all $(\alpha,1)$ terms with $\alpha\geq k$: 
$$\sum\limits_{\alpha\geq k}\Phi^{*}_{\alpha 1}(u^*)(z^*)^\alpha \overline{z^*}=
\sum\limits_{\alpha\geq k}\Phi_{\alpha 1}(u^*)(z^*)^\alpha\overline{z^*}+(z^*)^{k-1}\overline{z^*}
\Big((1+h)^{k-1}-1\Bigr).$$
Dividing the latter identity by $(z^*)^{k-1}\overline{z^*}$, we obtain a suitable $h$ with $h=O(|z^*|)$ by the implicit function theorem.
%It is easy to see that
%$\Gamma$ is also preserved.
%The new defining function is
%$$\Im w - \Phi(z+f, \bar z+\bar f, u),$$
%which is $O_{22}$ when restricted to $\Pi$. The fact that
%$z^*=z+O(|z|^2),w^*=w$ shows $\Phi^*_{11}=0$ and $\Phi_{21}^*(0)=1$.

We end up with a hypersurface satisfying 
 \begin{equation}\label{strong1}
\Phi^*_{\alpha 0}=0,\,\alpha\geq 0,\quad \Phi^*_{\alpha\beta}=0,\,\alpha,\beta\geq 0,\, 
\alpha+\beta\leq k-1,\quad \Phi^*_{\alpha 1}=0,\alpha\geq k-1.
\end{equation}

\medskip

\noindent \it  Fixing a parameterization on a degenerate chain. \rm Since $M$ already satisfies \eqref{strong1},
it remains to achieve the condition $\im\Phi_{2k-2,2}=0$, as the transfer to the complex defining equation given then all the normalization conditions in \eqref{stronggeneric}.  We do so by means of a \it gauge transformation \rm 
\begin{equation}\label{gauge}
z\mapsto z f(w),\quad w\mapsto g(w),\quad f(0)\neq 0,\,g(0)=0,\,g\rq{}(0)\neq 0.
\end{equation}
This transformation can be interpreted as a choice of a parameterization on the degenerate chain $\Gamma$.
We choose 
$$f(w):=(g'(w))^{1/k},\quad g(w)\in\RR{}\{w\},\quad g(0)=0,\,g\rq{}(0)=1.$$ 
It is easy to see that \eqref{strong1} is preserved under such transformation.
Let us denote by $\mathcal S$ the space of convergent series $\Psi(z,\bar z,u)$, satisfying  \eqref{stronggeneric}. Then, using again expansions of kind
$h(u+iv)=h(u)+ih'(u)v+\dots,$ we compute
\begin{gather*}
v^*=g'(u)v\quad\mbox{mod}\,\mathcal S,\\
z^*(\bar z^*)^{k-1}=z\bar z^{k-1}g\rq{}(u)
\left(1+i\frac{g''(u)}{g'(u)}v)\right)^{1/k}\left(1-i\frac{g''(u)}{g'(u)}v)\right)^{(k-1)/k}\quad\mbox{mod}\,\mathcal S =\\
=g'(u)z\bar z^{k-1}+i\frac{2-k}{k}g''(u)z^{2}\bar
z^{2k-2}\quad\mbox{mod}\,\mathcal S.
\end{gather*}
Similarly, for $1<\alpha\leq k/2$ we get:
\begin{gather*}
(z^*)^\alpha(\bar z^*)^{k-\alpha}=g'(u)z^\alpha\bar z^{k-\alpha}\quad\mbox{mod}\,\mathcal S,
\end{gather*}
thus
\begin{gather*}
v^*-P(z^*,\bar z^*)=g'(u)(v-P(z,\bar z))+i\frac{2-k}{k}g''(u)\bigl(z^{2k-2}\bar z^2-z^{2}\bar
z^{2k-2}\bigr)\quad\mbox{mod}\,\mathcal S.
\end{gather*}
We conclude that 
$$\im \Phi^*_{2k-2,2}=g'\im \Phi_{2k-2,2}+\frac{2-k}{k}g'',$$
so that the condition $\im \Phi^*_{2k-2,2}(u)=0$ leads to a second
order \it nonsingular \rm ODE. Solving it with the initial
condition $g'(0)=1$, we finally obtain a hypersurface of the form
\eqref{stronggeneric}, as required. 

At the same time, we have  brought $M$ to a special normal form \eqref{specialgeneric} or \eqref{specialtube}, respectively, corresponding to the triple $(M,\gamma,p)$ for the chosen degenerate chain $\gamma$.  Next, it is
not difficult to see, performing similar calculations, that the
gauge transformation chosen to achieve $\im \Phi_{2k-2,2}=0$ must have
the above form $z^*=z(g'(w))^{1/k},\,w^*=g(w)$ and hence is unique
up to the choice of the real parameter $g'(0)$, corresponding to
the action of the group \eqref{dilations}. (To see that the gauge transformation has the above form, one has to first compare in the basic identity 
$$\im g(w)=\Phi^*(zf(w),\bar z\bar f(\bar w),\re g(w))|_{w=u+i\Phi(z,\bar z,u)}$$
 terms independent of $z,\bar z$ and obtain $\im g(u)=0$, and second compare terms of kind $z\bar z^{k-1} u^l,\,l\geq 0$ and obtain $g'(u)=f(u)(\bar f(u))^{k-1}$, which gives the desired identity $g'(u)=(f(u))^k$ in view of $k\geq 3$).
   Thus, remarkably,
we can canonically, up to the action of the group of dilations
\eqref{dilations}, choose a parametrization on each degenerate
chain.\rm

\bigskip

\begin{center}\bf Case G with $\nu>1$.\end{center}

\bigskip

In this case we aim normalize the first and the $(\nu+1)$-th components of the Segre map as 
\begin{equation}
\label{Segre}p=(\xi,\eta)\mapsto
\left(\bar\eta,*,...,*,2i\bar\xi^{k-\nu}\right) .
\end{equation}
and canonically, up to the action of the group of dilations
\eqref{dilations}, choose a parametrization on the degenerate chain chosen above (we refer here for \cite{elz09} for some general facts on the normalization of the Segre map). This finally gives the strong normal form \eqref{stronggeneric}. However, the desired  normalizing transformation  can {\it not} be anymore decoupled as in the case $\nu=1$. Instead, we achieve the normalization conditions in the following three steps.

\medskip

\noindent \it Step I. \rm  Our goal here is to bring a hypersurface obtained in the previous step to such a form that it satisfies, in addition, the conditions 
$$\Theta^*_{\nu,\alpha}=0,\quad k-\nu\leq \alpha< 2k-\nu.$$
We first achieve the condition $\Theta_{k-\nu,\nu}=0$. For that, arguing identically to the tubular case, we find an appropriate  (complex-valued) analytic function  $\lambda(u)$ such that $\lambda(0)=1$ and the
biholomorphic change
$$z^*=z\lambda(w),\,w^*=w$$
maps $M$ to a hypersurface satisfying, in addition to \eqref{strong}, the condition $\Phi^*_{\nu,k-\nu}=0$. Now it is not difficult to check, from the implicit function procedure connecting $\Phi$ and $\Theta$, that we also have   $\Theta^*_{\nu,k-\nu}=0$.

 Second, for the hypersurface $M$, obtained in the
previous step, we remove all $(\nu,\alpha)$-terms with $k-\nu<\alpha< 2k-\nu$ in $\Phi$  by a
sequence of $(k-1)$ transformation $F_2,...,F_{k}$, where each $F_j$ has the form
$$(z^*,w^*)=(z+f_j(w)z^j,w).$$
 Let us analyze the hypersurface $M^*$, obtained after a transformation $F_2$. We aim to remove the $(\nu,k-\nu+1)$-terms from $M$ by means of this transformation. The basic identity looks as 
 $$\Theta(z,\bar z,\bar w)=\Theta^*\bigl(z+f_2(\Theta(z,\bar z,\bar w))z^2,\bar z+\bar f_2(\bar w)\bar z^2,\bar w\bigr).$$
 By comparing in the basic identity terms of uniform degree $\leq k$ in $z,\bar z$, we see that $M^*$ satisfies \eqref{strong} and also  $\Theta^{*}_{\nu,k-\nu}=0$.  Moreover, we have
 \begin{equation}\label{k-degree}
 \Theta^*_{\alpha\beta}(\bar w)=\Theta_{\alpha\beta}(\bar w)\quad\mbox{for}\quad\alpha+\beta=k.
 \end{equation}
  Then, using  \eqref{k-degree} and the fact that  all terms in $\Theta,\Theta^*$ have uniform degree $\geq k$ in $z,\bar z$, we compute, by comparing in the basic identity the $(\nu,k-\nu+1)$ terms:
 $$\Theta_{\nu,k-\nu+1}(\bar w)=2i(k-\nu)\bar  f_2(\bar w)+\Theta^*_{\nu,k-\nu+1}(\bar w)+(\nu-1)\Theta_{\nu-1,k-\nu+1}(\bar w)f_2(\bar w).$$
 Since $\Theta_{\nu-1,k-\nu+1}(0)=0$ (by the definition of $\nu$), the latter identity determines the holomorphic near the origin function $f_2$ uniquely. Proceeding with similar arguments, we uniquely determine holomorphic functions $f_3,...,f_{k}$ such that the composition $F_ k\circ\dots\circ F_2$ maps $M$ into a hypersurfaces satisfying 
 \begin{equation}\label{strong2}
\Theta^*_{\alpha 0}=\Theta^*_{0\alpha}=0,\,\alpha\geq 0,\quad \Theta^*_{\alpha\beta}=0,\,
\alpha+\beta\leq k-1,\quad \Theta^*_{\nu\alpha}=0,\,k-\nu\leq\alpha<2k-\nu.
\end{equation}

\medskip

\noindent \it Step II. \rm In this step we perform a transformation
$$z^*=\lambda(w)z+s(w)z^{k+1},\quad w^*=g(w),\quad g(w)\in\RR{}\{w\},\,g(0)=0,\,g\rq{}(0)\neq 0,\quad s(w)\in\CC{}\{w\}$$
with
$$\lambda(w):=\bigl(g\rq{}(w)\bigr)^{1/k}.$$
We aim to achieve the two conditions
$$\Theta^*_{\nu,2k-\nu}=\re\Theta^*_{2\nu,2k-2\nu}=0.$$
The basic identity looks as 
$$g(\Theta(z,\bar z,\bar w))=\Theta^*\bigl(\lambda(\Theta(z,\bar z,\bar w))z+s(\Theta(z,\bar z,\bar w))z^{k+1},\lambda(\bar w)\bar z+\bar s(\bar w)\bar z^{k+1},g(\bar w)\bigr).$$
By considering in the basic identity terms of uniform degree $\leq 2k-1$ in $z,\bar z$ we conclude that the new hypersurface also satisfies \eqref{strong2}. Moreover, we have, in view of the choice of $\lambda$:
\begin{equation}\label{lowdegree}
\Theta_{\alpha\beta}(\bar w)=\Theta^*_{\alpha\beta}(g(\bar w)),\quad \alpha+\beta=k.
\end{equation}
 We then consider the $(\nu,2k-\nu)$ and the $(2\nu,2k-2\nu)$ terms in the basic identity. By using \eqref{strong2} and \eqref{lowdegree} we see that the condition $\Theta^*_{\nu,2k-\nu}=0$ amounts to
 $$2ig\rq{}(\bar w)\Theta_{\nu,2k-\nu}+C(\bar w)g\rq{}\rq{}(\bar w)=2i(k-\nu)(\lambda(\bar w))^{k-1} \bar s(\bar w)+R(\bar w)(\lambda(\bar w))^k\frac{\lambda\rq{}(\bar w)}{\lambda(\bar w)}.$$
 Here $C(\bar w),R(\bar w)\in\CC{}\{\bar w\}$ are some precise functions \emph{vanishing at the origin}  and depending polynomially on $\Theta_{\alpha\beta},\,\alpha+\beta=k,\,\alpha\neq\nu$, exact form of which is of no interest to us. Similarly, by using \eqref{strong2} and \eqref{lowdegree} we see that the condition $\re\Theta^*_{2\nu,2k-2\nu}=0$ amounts to
 \begin{gather*}
 g\rq{}(u)\re\bigl[2i\Theta_{2\nu,2k-2\nu}(u)\bigr]-2g\rq{}\rq{}(u)+T(u)g\rq{}\rq{}(u)=\\
 =-4\nu(\lambda(u))^k \frac{\lambda\rq{}(u)}{\lambda(u)}+A(u)(\lambda(u))^k\frac{\lambda\rq{}(u)}{\lambda(u)}+(\lambda(u))^{k-1}\re\bigl[B(u)\bar s(u)\bigr].
 \end{gather*}
 Here $A(u),T(u)\in\RR{}\{u\},B(u)\in\CC{}\{u\}$ are again some precise functions vanishing at the origin  and depending polynomially on $\Theta_{\alpha\beta},\,\alpha+\beta=k,\,\alpha\neq\nu$, exact form of which is of no interest to us.
  
   We obtain a system
 \begin{equation}\label{degree2k}
 \begin{aligned}
&2ig\rq{}(\bar w)\Theta_{\nu,2k-\nu}+C(\bar w)g\rq{}\rq{}(\bar w)=2i(k-\nu)g\rq{}(\bar w) \frac{\bar s(\bar w)}{\lambda(\bar w)}+\frac{1}{k}R(\bar w)g\rq{}\rq{}(\bar w),\\
&g\rq{}(u)\re\bigl[2i\Theta_{2\nu,2k-2\nu}(u)\bigr]-2g\rq{}\rq{}(u)+T(u)g\rq{}\rq{}(u)=
-\frac{4\nu}{k}g\rq{}\rq{}(u)+\\
+&\frac{1}{k}A(u)g\rq{}\rq{}(u)+g\rq{}(u)\re\bigl[B(u) \frac{\bar s(u)}{\lambda(u)}\bigr].
 \end{aligned}
 \end{equation}
 We now solve the first equation in \eqref{degree2k} for $\frac{\bar s(\bar w)}{\lambda(\bar w)}$ and substitute into the second. Since $\frac{4\nu}{k}<2$ in the generic case, this gives us a second order \emph{nonsingular} analytic ODE for $g(\bar w)$, which we solve uniquely with some initial conditions $g(0)=0,\,g\rq{}(0)\neq 0$.

 \medskip

\noindent \it Step III. \rm Finally, for the hypersurface, obtained  in the previous step, we perform a transformation
$$z^*=z+f(z,w),\quad w^*=w,\quad f(z,w)=O(|z|^{k+2})$$
by which we aim to achieve the conditions $\Theta^*_{\nu\alpha}=0,\,\alpha>2k-\nu$.
The basic identity looks as
$$
\Theta(z,\bar z,\bar w)=\Theta^*\bigl(z+f(z,\Theta(z,\bar z,\bar w)),\bar z+\bar f(\bar z,\bar w),\bar w\bigr).$$
We immediately obtain from here that all terms in $\Theta$ of uniform degree $\leq 2k$ in $z,\bar z$ remain unchanged after such transformation, so that all the previously achieved normalization conditions are preserved. Collecting then in the basic identity all terms of kind $(\nu,\alpha),$ we see that the requirement $\Theta^*_{\nu\alpha}=0,\,\alpha>2k-\nu$ amounts to
$$ 2i\bar z^{k-\nu}+\sum_{\alpha>2k-\nu}\Theta_{\nu\alpha}(\bar w)\bar z^\alpha=2i(\bar z+\bar f(\bar z,\bar w))^{k-\nu}.$$
The latter equation enables us to determine $f$ with the desired properties uniquely. It is not difficult to see that this argument is reversible. Thus the new hypersurface $M$ satisfies all the normalization conditions in \eqref{stronggeneric}.
%It is easy to see that
%$\Gamma$ is also preserved.
%The new defining function is
%$$\Im w - \Phi(z+f, \bar z+\bar f, u),$$
%which is $O_{22}$ when restricted to $\Pi$. The fact that

Note now that we have already brought $M$ to a special normal form \eqref{specialgeneric}, corresponding to the triple $(M,\gamma,p)$ for the chosen degenerate chain $\gamma$.    Thus, in the same way as in the case $\nu=1$,
we can canonically, up to the action of the group of dilations
\eqref{dilations}, choose a parametrization on each degenerate
chain (the parameter on the degenerate chain is fixed by solving a second order ODE in the above Step II).

\bigskip

\begin{center}\bf Case C.\end{center}

\bigskip

The construction of the normalizing transformation in this case is very similar to that in the generic case with $\nu>1$, that is why we leave the details to the reader and only provide below a scheme of the normalization procedure for a hypersurface, already satisfying \eqref{strong}. 

\medskip

\noindent \it Step I. \rm  We achieve the conditions 
$$\Theta^*_{\nu\alpha}=0,\quad \nu\leq \alpha< 3\nu.$$ 
For that, we first perform a transformation
$$z^*=z\lambda(w),\quad w^*=w,\quad\lambda\in\RR{}\{w\},\,\lambda(0)=1$$
in such a way that for the new hypersurface $\im\Theta^*_{\nu\nu}=0$ (the argument is analogous to that in the case $\nu=1$). Then we note that, in view of \eqref{strong} and the implicit function procedure relating $\Phi$ and $\Theta$, we always have $\re\Theta^*_{\nu\nu}=0$ for a hypersurface given by \eqref{strong}, that is why we get in turn $\Theta^*_{\nu\nu}=0$. Second, we perform  a
sequence of $(k-1)$ transformation $F_2,...,F_{k}$, where each $F_j$ has the form
$$(z^*,w^*)=(z+f_j(w)z^j,w)$$
and achieve $\Theta^*_{\nu\alpha}=0,\quad \nu< \alpha< 3\nu.$

\medskip

\noindent \it Step II. \rm  We achieve the conditions 
$$\Theta^*_{\nu,3\nu}=\im\Theta^*_{2\nu,2\nu}=0$$ by performing a transformation
$$z^*=e^{ih(w)}z+s(w)z^{k+1},\,w^*=w,\quad h\in\RR{}\{w\},\,h(0)=0,\quad s\in\CC{}\{w\}.$$
We obtain a first order nonsingular ODE for $h(\bar w)$ which we solve uniquely with the initial condition $h(0)=0$. After that we uniquely determine the function $s(\bar w)$.

\medskip

\noindent \it Step III. \rm  We achieve the conditions 
$$\Theta^*_{\nu,\alpha}=0,\quad 3\nu< \alpha< 5\nu$$ by performing a 
sequence of $(k-1)$ transformation $F_2,...,F_{k}$, where each $F_j$ has the form
$$(z^*,w^*)=(z+f_j(w)z^{k+j},w).$$

\medskip

\noindent \it Step IV. \rm  We achieve the conditions 
$$\Theta^*_{\nu,5\nu}=\im\Theta^*_{3\nu,3\nu}=0$$ by performing a transformation
$$z^*=\lambda(w)z+p(w)z^{2k+1},\,w^*=g(w),\quad g\in\RR{}\{w\},\,g(0)=0,\,g\rq{}(0)\neq 0,\quad p\in\CC{}\{w\}$$
with
$$\lambda(w):=(g\rq{}(w))^{1/k}.
$$
We obtain in this step a third order ODE for the function $g(\bar w)$ which we solve uniquely with some initial conditions 
$$g(0)=0,\quad g\rq{}(0)=a\in\RR{}\setminus\{0\},\quad g\rq{}\rq{}(0)=b\in\RR{}.$$

 \medskip

\noindent \it Step V. \rm Finally, we perform a transformation
$$z^*=z+f(z,w),\quad w^*=w,\quad f(z,w)=O(|z|^{2k+2})$$
by which we  achieve the conditions $\Theta^*_{\nu\alpha}=0,\,\alpha>5\nu$. The new hypersurface then satisfies all the normalization conditions in \eqref{strongcirc}. Arguing as above we see 
in the circular case that we can canonically, up to the action of the  real projective group 
$$t\mapsto\frac{at}{1+bt},\,a,b\in\RR{},\,a\neq 0,$$ 
choose a parameter $t$ on each degenerate
chain.\rm

\medskip

\noindent \bf Case T2. \rm For a hypersurface of class T2 satisfying \eqref{strong} we first argue identically to the case T1 and perform a unique transformation $$z=z^*+f(z^*,w^*),\quad w=w^*, \quad f(z^*,w^*)=O(|z^*|^{2})$$ bringing $M$ to the form \eqref{strong1}. 
We claim then
that for the new hypersurface we have  $\re\Phi_{k,k-1}(u)=0$ in \eqref{strong1}. 

Indeed,
consider the (formal) transformation $F=(f,g)$ with $dF|_0=\mbox{Id}$, bringing a
hypersurface \eqref{strong1}  into normal form \eqref{normalformtube}, and study the
equation \eqref{tangency}, applied to it. Note that $M$ is in the normal form \eqref{normalformtube} up to weight $2k-2$. As follows from the normalization procedure, all (formal) maps bringing to a normal form up to weight $2k-2$ coincide with the identity up to weights $m-k+1$ in $f$ and $m$ in $g$, respectively. Let us use notations from the proof of Proposition 2.3. Then, using the above observation and collecting terms with 
 $z^{k-2}\bar z u^1$, we first obtain $\re f_0\rq{}(0)=0$. After that,  gathering terms with $z^{k-1}\bar z^0u^1,\,z^{2k-2}\bar z^1u^0,\,z^k\bar
z^{k-1}u^0$, it is
straightforward to check that we obtain, respectively, the equation \eqref{k-1,0}, differentiated and evaluated at $0$ with $\Psi_{k-1}\rq{}(0):=0$,  the equation \eqref{2k-2,1}, evaluated at $0$ with $\Psi_{2k-2,1}(0):=0$, and the equation \eqref{k,k-1}, evaluated at $0$ with $\Psi_{k,k-1}(0):=2\Phi_{k,k-1}(0)$. Moreover,
$\im f'_0(0)=0$ thanks to the fact that $\Gamma$ is a
degenerate chain. We immediately conclude from the above obtained equations that
$g_{k-1}\rq{}(0)=f_{k}(0)=0$ and
$\re\Phi_{k,k-1}(0)=0$. Since the prenormal form \eqref{strong1} is
invariant under the real shifts $w\mapsto w+u_0,\,u_0\in\RR{}$,
and $\Gamma\subset M$ is a degenerate chain, we similarly conclude
that $\re\Phi_{k,k-1}(u_0)= 0$ for any small $u_0\in\RR{}$ in
\eqref{strong1}, as required. Thus we have achieved all the normalization conditions in \eqref{normalformtube}, except $\im\Phi_{2k-2,2}(u)=0$. The step in the normalization procedure, aiming to achieve the normalization condition $\im\Phi_{2k-2,2}(u)=0$, can be addressed as a choice of a parameterization on the unique degenerate chain, passing through $0$, and is completely analogues to the similar step in the cases G and T1 with $\nu=1$, so that we leave the details   to the reader. In particular, in view of the fact that $M$ is already in the normal form \eqref{normalformtube},
we can canonically, up to the action of the group of dilations
\eqref{dilations}, choose a parametrization on each degenerate
chain.\rm

\subsection{Uniqueness properties for a normalizing transformation}  For the proof of the uniqueness statements in Theorems 1 and 2 we address each of the cases G,C,T1,T2 separately.

\medskip

\noindent \bf Case G. \rm Since a hypersurface of kind G, satisfying \eqref{stronggeneric}, is also in the normal form \eqref{normalformgeneric},  the only statement that needs to be proved in this case is the fact that there exists a unique degenerate chain, passing through $p$. As follows from the proof in Section 5.1, a degenerate chain is given by the equation \eqref{Gamma} in an appropriate normal form \eqref{stronggeneric} of $M$. Now the desired uniqueness follows from Corollary 2.4 and the fact that any transformation \eqref{dilations} preserves the curve \eqref{Gamma}. 

\medskip

\noindent \bf Case C. \rm We again notice that any hypersurface of kind C, which is in a a strong normal form \eqref{strongcirc}, is also in the normal form \eqref{normalformcirc}. This implies the required uniqueness statement for the normalizing transformation.
%Next, we show that any transformation \eqref{biggroup} preserves the strong normal form \eqref{normalformcirc}. As the statement is obvious for a transformation, belonging to the subgroup in \eqref{biggroup} normalized by $r=0$, it is sufficient to prove it for the subgroup in \eqref{biggroup}, normalized by $\lambda=1,\,\theta=0$. Each such transformation $F$ satisfies all the conditions in \eqref{gaugecirc}, except $g\rq{}\rq{}(0)=0$ (we have $g\rq{}\rq{}(0)=-2r$). Then, performing a calculation, identical to \eqref{findcirc}, we get that the fact that the new hypersurface $M^*$ satisfies \eqref{strongcirc} amounts to the third order nonsingular ODE 
%\begin{equation}\label{ODEformap}
%\frac{1}{3}g\rq{}\rq{}\rq{}-\frac{(g\rq{}\rq{})^2}{2g\rq{}}=0.
%\end{equation}
%It is easy to check that the function $g(w):=\frac{w}{1+rw}$ satisfies \eqref{ODEformap}, so that $M^*$ is in the strong normal form  \eqref{strongcirc}, as required. It is straightforward now that any two strong normal forms of a fixed germ of kind C are related by means of a transformation \eqref{biggroup}.
To prove the uniqueness of the degenerate chain passing through $p$, let us consider a hypersurface \eqref{strong} with a circular polynomial model. Then, in view of \eqref{typel}, the constant  type  locus of $M$ is contained in the set $E=\{d^{k-3}\Delta(q)|_{T^{\CC{}}_q}=0\}$ (here $\Delta$ is the Levi form). For a hypersurface \eqref{strong} it gives
$$E=\bigl\{z+O(2)=0\bigr\},$$
where $O(2)$ denotes terms of degree $\geq 2$ in $z,\bar z,u$. Thus $E$ is a smooth curve in $M$,
so that it coincides with $\Gamma$, as required for the uniqueness. 

\medskip

\noindent \bf Case T1. \rm In this case we shall use the special normal form, considered in Section 3.  Let $M$ be a hypersurface of class T1, which is brought into a strong normal form \eqref{stronggeneric} after a certain choice of a degenerate chain $\gamma$ through $p$, as described in the proof in Section 5.1. Then $M$ is also in the special normal form \eqref{specialtube}, associated with the triple $(M,\gamma,p)$. Now the uniqueness statement for a strong normalization map $F$ follows from Proposition 3.1.

If, for some choice of degenerate chains $\gamma_1 \ni p_1,\,\gamma_2\ni p_2$, two germs $(M_1,p_1),\,(M_2,p_2)$ of class T1 have the same strong normal form, then they are obviously biholomorphically equivalent. On the other hand, if two germs $(M_1,p_1),\,(M_2,p_2)$ of class T1 are biholomorphically equivalent by means of a mapping $H:\,(M_1,p_1)\mapsto (M_2,p_2)$, we fix a degenerate chain $\gamma_1\subset M_1$ through $p_1$. Then $\gamma_1$ is mapped into a degenerate chain $\gamma_2\subset M_2$ through $p_2$. Thus $H$ performs a biholomorphic equivalence between the triples 
$(M_1,\gamma_1,p_1)$ and $(M_2,\gamma_2,p_2)$ (in the sense of the considerations in Section 3), and by Corollary 3.2 some special normal forms of them are mapped onto each other by means of a transformation \eqref{dilations}, as required.

  \medskip

\noindent \bf Case T2. \rm As any hypersurface of kind T2, satisfying \eqref{strongtube2}, is also in the normal form \eqref{normalformtube}, the proof of all desired facts immediately follows from the formal normalization
Proposition 2.1 and the proof in Section 5.1.

\smallskip

Theorems 1 and 2 are completely proved now. The following example shows, that the discrete parameter $\sigma$, corresponding to a choice of a degenerate chain in $M$ through $p$ in the case T1, can actually occur. 

\begin{ex} 
Consider the real-analytic hypersurface $M\subset\CC{2}$ of type $4$ at the origin with a tubular polynomial model, given by the equation 
$$
v=(\re z)^3\bigl(\re z+(\im z)^2-u^2\bigr).$$
It is easy to check that at all points, belonging to either of the curves 
$$\gamma_\pm=\bigl\{\re z=0,\,\im z=\pm u,\,v=0\bigr\},$$ 
the hypersurface $M$ has type $4$, while at points with $z=0,\,v=0,\,u\neq 0$ the type of $M$ equals $3$. Hence the Levi degeneracy set $\Sigma$ has dimension $2$, and $M$ is of class T1 at $0$ (as follows from Propositions 4.1 and 4.2). At the same time, $M$ contains at least two transverse curves of constant type through $0$.
\end{ex}

\section{Canonical connection for hypersurfaces of class T2}

In this section we apply the strong normal form to construct a canonical connection on the Levi degeneracy set of a hypersurface of class T2, such that the restriction of its complexification onto the initial hypersurface $M$ gives a solution for the holomorphic equivalence problem for germs at Levi degenerate points of hypersurfaces of class T2.  Recall that the Levi degeneracy set $\Sigma$ in the case T2 is a totally real surface $\Sigma\subset\CC{2}$, and one can define the transverse and the tangent foliations $\mathcal T$ and $\mathcal S$ on $\Sigma$, respectively. Both  $\mathcal T$ and $\mathcal S$ are biholomorphically invariant and transverse to each other.

 Let $q\in\Sigma$ be a point in a neighborhood of the reference point $p$, and $X,Y$ denotes a pair of smooth vector fields on $\Sigma$ near $q$. We denote the leaves of the foliations $\mathcal T$ and $\mathcal S$ through $p$ by $t_p$ and $s_p$, respectively. We can assume (by fixing a tangent vector $e_p\in T_p t_p$) that all the chains $t_q$ are parameterized (see Section 5). In order to define a connection $\nabla_Y(X)$ on $\Sigma$, it is sufficient to define, for any $q\in\Sigma$ and each pair $X,Y$, a parallel translation of $X_q$  along the integral curve of $Y$ through $q$. Since $\mathcal T$ and $\mathcal S$ are transverse to each other, it is sufficient, by linearity, to define the parallel translation in the cases when (i)  $X_q\in T_q t_q, Y_q\in T_q s_q$;  (ii)  $X_q\in T_q s_q, Y_q\in T_q t_q$; (iii)  $X_q\in T_q t_q, Y_q\in T_q t_q$; (iv)  $X_q\in T_q s_q, Y_q\in T_q s_q$. In other word, we define a parallel translation for a vector, tangent to either of the leaves $t_q,s_q$, along either of the leaves $t_q,s_q$.

In case (i), we translate $X_q$ to a point $r\in s_p$ as follows. We consider the leaf $t_r$, and consider the analytic diffeomorphism $h$ of $(t_q,q)$ onto $(t_r,r)$, which assigns to each $\zeta\in t_q$ the intersection point $s_\zeta\cap t_q=:\{\zeta\rq{}\}$. Then the desired vector $Z$, parallel to $X_q$, is defined as
$Z:=  dh|_q(X_q)$.

Case (ii) is completely analogues to (i).

In case (iii), we use the existence of a parameterization $t_q=\bigl\{\gamma_q(\xi)\bigr\},\,\gamma_q(\xi_1)=q,\,\gamma_q(\xi_2)=r$ on the chain $t_q$ to simply translate the vector $\gamma_q\rq{}(t_1)$ into the vector $\gamma_q\rq{}(t_2)$, for any $r\in t_q$, and then by linearity extend the procedure for all other $X_p\in T_q t_q$. Clearly, this definition is independent of the choice of the vector $e_p$.

Finally, in case (iv) we note that a hypersurface of finite type $k\geq 3$ with a tubular polynomial model at a point $q\in M$ admits a canonical nondegenerate $k$-form
$$\varphi_q:\,(T^{\CC{}}_q M)^{k-1}\times \overline{T^{\CC{}}_q M}\mapsto\CC{}\otimes (T_q M/T^{\CC{}}_q M),$$
given by $k$-th order brackets (as can be seen from \eqref{tangentmodel}). Since $M$ has the same polynomial model at each point $q\in\Sigma$ (see Section 3), we get a canonical nondegenerate $k$-form $\varphi$ on $\Sigma$. Considering its restrictions on the tangent spaces $T_q s_q$, we obtain a canonical nondegenerate $k$-form $\psi$, which we may interpret as a one valued in $T_q t_q$ for each $q\in\Sigma$:
$$\psi_q:\,(T_q s_q)^{k-1}\times \overline{T_q s_q}\mapsto T_q t_q.$$
Now the desired parallel translation in case (iv) is performed as follows. Fixing a point $r\in s_q$, we take the vector $W:=\psi(X_q,...,X_q,\overline{X_q})$, translate in parallelly along the leaf $s_p$ as described in case (i), and obtain the resulting vector $V\in T_r t_r$. Now the desired vector $Z\in T_r s_r$ is uniquely defined by the condition $\psi(Z,...,Z,\bar Z)=V$.

Thus we obtain a unique canonical connection $\nabla$, invariant under biholomorphic transformations of a germ $(M,p)$. As follows from the construction, all the chains $t_q$, as well as the tangent leaves $s_q$, are geodesics for this connection. We may consider now the exponential mapping (geodesic flow) $$\mbox{exp}(X):\,(\RR{2},0)\mapsto (\Sigma,p)$$ for the connection $\nabla$, and the map $\mbox{exp}^{-1}$ gives canonical (normal) coordinates $(x_1,x_2)$ for $\Sigma$. If $(M,p)$ was initially simplified in such a way that
$$T_p t_p=\{z=0,\,v=0\},\quad T_p s_p=\{w=0,\re z=0\},$$
 then
 in the normal coordinates $p$ is the origin, the transverse leaves $t_q$ all look as $\{x_1=const\}$, and the tangent leaves $s_q$ as $\{x_2=const\}$. We can further fix the exponential mapping uniquely by the requirement that all transverse leaves (the chains) are canonically, up to a non-zero real scalar $\mu$, parameterized by a real parameter $\xi\in\RR{}$ as
$$\bigl\{x_1=const,\,x_2=\mu\xi\bigr\}.$$
In addition, it follows from the construction of the connection that the canonical $k$-form $\psi$ on  tangent leaves looks as $$\psi=\mu (x_1)^{k}$$
at any point $q\in\RR{2}$.

Considering now the complexification $\Sigma^{\CC{}}=\CC{2}$ of the totally real surface $\Sigma$, we obtain the compexification
$\nabla^{\CC{}}$
%:\, \CC{2}\times\CC{2}\mapsto\CC{2}$$
of the (real-analytic) connection $\nabla$ as a
holomorphic connection on $\Sigma^{\CC{}}$. 
The complexifications $\mathcal T^{\CC{}}$ and $\mathcal S^{\CC{}}$ of the foliations $\mathcal T$ and $\mathcal S$ respectively form a pair of foliations of $\CC{2}$, transverse to each other.  
%The leaves of these foliations are geodesics for $\nabla^{\CC{}}$. 
%Then the restriction
%$$\nabla^{\CC{}}_M:\, TM\times TM\mapsto\CC{2}$$
%of $\nabla^{\CC{}}$ onto $M$ gives a canonical object on $M$. 
The complexification $\mbox{exp}^{\CC{}}(Z)$ gives the exponential mapping for $\nabla^{\CC{}}$, and the map $(\mbox{exp}^{\CC{}})^{-1}$ provides canonical (normal) coordinates $(z_1,z_2)$ for $(M,p)$. 
%Similarly to the case of a real connection, the foliations $\mathcal T$ and $\mathcal S$ become, respectively, vertical and horizontal, the vertical leaves are all parameterized as
%$$\bigl\{z_1=const,\,z_2=\mu\xi\bigr\},$$
%and the complexification of the canonical $k$-form $\psi$ looks as
% $$\psi^{\CC{}}=\mu (z_1)^{k-1}\bar z_1$$
% at any point in $\CC{2}$.
 
 Now, in order to prove Theorem 3, it is sufficient to prove the following proposition.

\begin{propos}
 Let $(M,p)$ and $(M^*,p^*)$ be germs of two real-analytic hypersurfaces of the same type $k\geq 3$ as above, $\Sigma$ and $ \Sigma^*$ their Levi degeneracy sets, and $\nabla^{\CC{}}$ and $(\nabla^*)^{\CC{}}$
 the corresponding canonical complexified connections. Let $(N,0)$ and $(N^*,0)$ be two normal forms of $(M,p)$ and $(M^*,p^*)$, given by normal coordinates for the connections $\nabla$ and $\nabla^*$, respectively. Then any biholomorphism $F:\,(N,0)\mapsto (N^*,0)$ is a linear mapping, as in \eqref{dilations}.
 \end{propos}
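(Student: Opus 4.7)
The plan is to chain together three biholomorphic invariants from Section~6 --- the complexified canonical connection $\nabla^{\CC{}}$, the transverse and tangent foliations $\mathcal T^{\CC{}},\mathcal S^{\CC{}}$, and the canonical $k$-form $\psi$ --- to force $F$ into the dilation group \eqref{dilations}, one constraint at a time. First, $F$ must preserve the complexified connection, $F^{*}(\nabla^{*})^{\CC{}}=\nabla^{\CC{}}$, by the biholomorphic invariance of the construction in Section~6. The standard commutation identity for connection-preserving maps,
\[
F\circ\mbox{exp}^{\CC{}}=(\mbox{exp}^{*})^{\CC{}}\circ dF|_{0},
\]
evaluated in the normal coordinate charts $(z_1,z_2)$ and $(w_1,w_2)$, in which both exponential maps are the identity, implies that $F$ coincides with its own differential at $0$. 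Hence $F$ is a linear invertible self-map of $\CC{2}$.

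Next, I would exploit the biholomorphic invariance of the canonical foliations $\mathcal T^{\CC{}},\mathcal S^{\CC{}}$, which in normal coordinates take the form $\{z_1=\mbox{const}\}$ and $\{z_2=\mbox{const}\}$ respectively (and likewise on the target). An invertible linear map preserving both coordinate foliations is necessarily diagonal, so $F(z_1,z_2)=(a z_1,d z_2)$ for some nonzero $a,d\in\CC{}$. Moreover, since $F$ must send $\Sigma$ to $\Sigma^{*}$, and both appear as $\RR{2}\subset\CC{2}$ in the respective normal coordinates, one has $a,d\in\RR{}\setminus\{0\}$. To pin down the relation between $a$ and $d$, I would invoke the naturality of the canonical $k$-form: in the normal chart, $\psi$ has the form $\mu\, x_1^{k}\,(\partial/\partial x_2)$, and similarly $\psi^{*}=\mu^{*}\, w_1^{k}\,(\partial/\partial w_2)$. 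The pushforward identity $F_{*}\psi=\psi^{*}$, applied using $dF(\partial/\partial x_1)=a\,(\partial/\partial w_1)$ and $dF(\partial/\partial x_2)=d\,(\partial/\partial w_2)$ and the fact that $a\in\RR{}$, produces the scalar equation $a^{k}\mu^{*}=d\,\mu$. The residual scalar $\mu$ is the freedom in the exponential chart, matched precisely by the one-parameter dilation group \eqref{dilations}, so after the canonical identification $\mu=\mu^{*}$ and hence $d=a^{k}$. Setting $\lambda:=a$ yields $F(z,w)=(\lambda z,\lambda^{k}w)$, which is exactly \eqref{dilations}.

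The main obstacle I foresee is the last reconciliation between the scalar equation $a^{k}\mu^{*}=d\,\mu$ and the dilation group \eqref{dilations}: one must verify that the one-parameter ambiguity in the choice of exponential chart, encoded by $\mu$ and $\mu^{*}$, is precisely matched by the one-parameter freedom in \eqref{dilations}, so that after the appropriate normalization one has $\mu=\mu^{*}$ and $d=a^{k}$. This requires tracing $\mu$ carefully through the construction of the canonical $k$-form $\psi$ from the $k$-th order brackets in Section~6, and checking that varying $\mu$ corresponds exactly to pre-composition with an element of \eqref{dilations}.
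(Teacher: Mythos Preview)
Your approach is correct and follows a route parallel to, but not identical with, the paper's argument. The paper does not invoke the full connection preservation to obtain linearity in one stroke via the exponential-map identity. Instead it proceeds invariant-by-invariant: first the two complexified foliations $\mathcal T^{\CC{}},\mathcal S^{\CC{}}$ force $F$ into the decoupled (not yet linear) form $z_1\mapsto f(z_1)$, $z_2\mapsto g(z_2)$; next, the canonical parameterization on the transverse leaves (the chains), which is part of the connection data and is defined up to a real scalar, forces $g(z_2)=\mu z_2$ for some real $\mu\neq 0$; finally, preservation of the canonical $k$-form $\psi$, which in normal coordinates is a constant real multiple of $(z_1)^{k-1}\bar z_1$ with values in $\partial_{z_2}$, forces $f(z_1)=\mu^{1/k}z_1$. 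Thus $d=\mu$ and $a=\mu^{1/k}$, giving $d=a^k$ directly.

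Your route---connection preservation $\Rightarrow$ linearity, foliations $\Rightarrow$ diagonal, reality of $\Sigma$ $\Rightarrow$ real entries, $k$-form $\Rightarrow$ $d=a^k$---is more conceptual and arguably cleaner in the first step, since the identity $F\circ\exp^{\CC{}}=(\exp^*)^{\CC{}}\circ dF|_0$ is a general fact about affine maps in normal coordinates. The price is exactly the bookkeeping you flagged: when you compare the $k$-form coefficients $\mu,\mu^*$ in two independently chosen normal charts, the naturality of $\psi$ yields only $d\mu=a^k\mu^*$, not $d=a^k$ outright. The paper's proof avoids this by not comparing two abstract normalization constants; it instead uses the chain parameterization (a connection invariant in its own right) to fix $g$ first, and then reads off $f$ from $\psi$ relative to the already-determined $g$. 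In effect, the paper uses two linked invariants (parameter on chains and $k$-form) whose compatibility is built into the construction of the normal chart, so the scalar ambiguity cancels automatically. If you prefer your route, the clean way to close the gap is to observe that the normal chart in Section~6 is, by construction, determined up to exactly the one-parameter group \eqref{dilations} (the simultaneous normalization of the chain parameter and of $\psi$ leaves only that freedom), so two such charts on the same germ differ by a dilation; composing, any biholomorphism between normal forms is then a dilation as well.
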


 \begin{proof}
 Normal coordinates for the connections  $\nabla^{\CC{}}$ and $(\nabla^*)^{\CC{}}$, respectively, are defined up to a choice of parameterization on the vertical leaves $t_p$ (which, in turn, is reduced to a scalar $\mu\in\RR{},\,\mu\neq 0$, see Section 5). First notice that the map $F$ must preserve the leaves $\{z_1=const\},\,\{z_2=const\}$, thus it has the form 
 $$z_1\mapsto f(z_1),\,z_2\mapsto g(z_2).$$
 Further, $F$ preserves the parameterization on the vertical leaves, so that $g$ has the form 
 $$g(w)=\mu w,\,\mu\neq 0.$$ 
 Finally, $F$ must preserve the canonical form $\psi$, which is a real multiple of $(z_1)^{k-1}\bar z_1$ at each point, thus $f$ looks as 
 $$f(z_1)=(\mu)^{1/k}z_1.$$
 This prove the proposition.
 \end{proof}
Theorem 3 is  proved now.
%For the proof of Theorem 3, it is sufficient to show that if $(M,p)$ and $(M^*,p^*)$ are germs of two %hypersurfaces of the same type $k\geq 3$ and of class T2 at the points $p$ and $p^*$ respectively, %$\nabla$ and $\nabla^*$ are the canonical connections on their Levi degeneracy sets $\Sigma$ and $ %\Sigma^*$, respectively, and there is an analytic diffeomorphism $H:\,(\Sigma,p)\mapsto %(\Sigma^*,p^*)$, preserving the connections, then $(M,p)$ and $(M^*,p^*)$ are biholomorphic (in fact, %by means of the complexification of the mapping $H$). Indeed, transferring to normal coordinates for %$\Sigma,\Sigma^*$, as described above, we see that the map $H$ must preserve the leaves %$\{x_1=const\},\,\{x_2=const\}$, thus

\begin{flushleft}

\end{flushleft}

\end{document}